\newtheorem{theorem}{Theorem}
\newtheorem{lemma}[theorem]{Lemma}
\newtheorem{proposition}[theorem]{Proposition}
\newtheorem{observation}[theorem]{Observation}
\newtheorem{them}{Theorem}
\newtheorem{lema}[them]{Lemma}
\newtheorem{definition}[theorem]{Definition}
\newtheorem{example}[theorem]{Example}
\begin{document}

\title[Excellent graphs ]  { Excellent graphs with respect to domination:  subgraphs induced by minimum dominating sets}

\author[]{Vladimir Samodivkin}
\address{Department of Mathematics, UACEG, Sofia, Bulgaria}
\email{vl.samodivkin@gmail.com}
\today
\keywords{ domination number, excellent graph}

\begin{abstract}
A graph $G=(V,E)$ is $\gamma$-excellent if $V$ is a union of all 
$\gamma$-sets of $G$, where $\gamma$ stands for the domination number. 
Let $\mathcal{I}$ be a set of all mutually nonisomorphic graphs and $\emptyset \not= \mathcal{H} \subsetneq \mathcal{I}$. 
In this paper we initiate the study of the $\mathcal{H}$-$\gamma$-excellent graphs, 
which we define as follows.
 A   graph $G$ is   $\mathcal{H}$-$\gamma$-excellent  if the following hold: 
(i)  for every $H \in \mathcal{H}$ and for each $x \in V(G)$ 
									there exists an induced subgraph $H_x$ of $G$ 
                   such that $H$ and $H_x$ are isomorphic,  $x \in V(H_x)$ and        
									$V(H_x)$ is a subset of some  $\gamma$-set of $G$, and 																	
(b)            the vertex set of  every  induced subgraph $H$  of $G$,
										which is isomorphic to some element of $\mathcal{H}$, 
                     is  a subset of some $\gamma$-set of $G$.   
	For each of some well known   graphs, 
including cycles, trees and some cartesian products of two graphs, 
   we describe its  largest set $\mathcal{H} \subsetneq \mathcal{I}$
 for which the graph  is   $\mathcal{H}$-$\gamma$-excellent.
Results on $\gamma$-excellent regular graphs and 
a generalized lexicographic product of  graphs are presented. 
Several open problems and questions are posed. 
\end{abstract}

\maketitle


\section{Introduction}

All graphs in this paper will be finite, simple, and undirected. 
We use \cite{hhs1} as a reference for terminology and notation which are not explicitly defined here. 
For a graph $G = (V(G), E(G))$, let $\pi$ be a graphical property that can
be possessed, or satisfied by the subsets of $V$.
 For example, being a maximal complete subgraph, a maximal independent set, 
 acyclic,  a closed/open neighborhood, a minimal dominating set, etc.
Suppose that $f_\pi$   and $F_\pi$ are  the associated  graph invariants: 
 the minimum  and maximum cardinalities of a set with property $\pi$.
Let $\mu \in \{f_\pi, F_\pi\}$. For a graph $G$, denote by $\mathtt{M}_\mu(G)$ 
the family of all  subsets of $V(G)$ each of which has property $\pi$ and cardinality $\mu(G)$. 
Each element of $\mathtt{M}_\mu(G)$ is called  a $\mu$-{\em set} of $G$. 
Fricke et al. \cite{fhhhl} define a graph $G$ to be $\mu$-{\em excellent} if 
each its vertex belongs to some $\mu$-set. 
Perhaps historically the first results on $\mu$-excellent graphs 
were published by  Berge \cite{berge}                                                                                    
who  introduced the class of $B$-graphs consisting of all  
graphs in which every vertex is  in a maximum independent set. 
Of course  all $B$-graphs form  the class of $\beta_0$-excellent graphs, 
where $\beta_0$ stand for the  independence number. 
The study of excellent graphs with respect to the  some domination related parameters 
was initiated by   Fricke et al. \cite{fhhhl} and continued e.g. in \cite{bs,hh,h2,msu,sam3,sry,ys}.

 In this paper  we focus on the following  subclass of the class of $\mu$-excellent graphs. 

\begin{definition} \label{def1}
Let $\mathcal{I}$ be a set of all mutually nonisomorphic graphs and $\emptyset \not= \mathcal{H} \subsetneq \mathcal{I}$. 
We say that a   graph $G$ is   $\mathcal{H}$-$\mu$-excellent  if the following hold: 
\begin{itemize}
\item[(i)]  For each $H \in \mathcal{H}$ and for each $x \in V(G)$ 
									there exists an induced subgraph $H_x$ of $G$ 
                   such that $H$ and $H_x$ are isomorphic,  $x \in V(H_x)$ and        
									$V(H_x)$ is a subset of some  $\mu$-set of $G$.																		
\item[(ii)] For each  induced subgraph $H$  of $G$,
										which is isomorphic to some element of $\mathcal{H}$, 
                     there is  a $\mu$-set of $G$ having $V(H)$ as a subset.   
\end{itemize} 
\end{definition}

By the above definition it immediately follows that each  $\mathcal{H} $-$\mu$-excellent graph 
is $\mu$-excellent. If a graph $G$ is  $\mathcal{H} $-$\mu$-excellent 
and $\mathcal{H}$ contains only one element, 
e.g.   $\mathcal{H} = \{H\}$, we sometimes omit the brackets 
and say that a graph $G$ is $H$-$\mu$-excellent. 
Define the $\mu$-{\em excellent family of induced subgraphs} of a $\mu$-excellent graph $G$, 
denoted by $G\left\langle \mu \right\rangle$, as the family of all graphs 
$H \in \mathcal{I}$ for which $G$ is $H$-$\mu$-excellent. 
The next  two observations are  obvious.

\begin{observation}  \label{mu}
If $G$ is a $\mu$-excellent  graph, then $\{K_1\} \subseteq G\left\langle \mu \right\rangle$  
 and  $\mu(G)  \geq \max \{|V(H)| \mid H \in G\left\langle \mu \right\rangle\} $.  
\end{observation}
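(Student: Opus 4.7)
The plan is to unwind Definition \ref{def1} directly in the case $\mathcal{H}=\{K_1\}$ and then exploit condition (i) to extract the size bound.

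For the containment $\{K_1\}\subseteq G\langle\mu\rangle$, I would verify conditions (i) and (ii) of Definition \ref{def1} for $\mathcal{H}=\{K_1\}$. Since $G$ is $\mu$-excellent, every $x\in V(G)$ lies in some $\mu$-set $D_x$ of $G$. Take $H_x:=G[\{x\}]$; then $H_x\cong K_1$, contains $x$, and $V(H_x)=\{x\}\subseteq D_x$, which establishes (i). Conversely, any induced subgraph of $G$ isomorphic to $K_1$ is of the form $G[\{y\}]$ for some $y\in V(G)$, and $\{y\}$ is a subset of a $\mu$-set by the same $\mu$-excellence hypothesis, establishing (ii). Hence $G$ is $K_1$-$\mu$-excellent, i.e.\ $K_1\in G\langle\mu\rangle$.

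For the inequality $\mu(G)\geq\max\{|V(H)|\mid H\in G\langle\mu\rangle\}$, I would fix an arbitrary $H\in G\langle\mu\rangle$ and an arbitrary vertex $x\in V(G)$. Applying condition (i) of Definition \ref{def1} to the pair $(H,x)$ yields an induced subgraph $H_x$ of $G$ with $H_x\cong H$ and $V(H_x)\subseteq D$ for some $\mu$-set $D$ of $G$. Taking cardinalities,
\[
|V(H)|=|V(H_x)|\leq|D|=\mu(G).
\]
Since this holds for every $H\in G\langle\mu\rangle$, passing to the maximum on the left gives the claimed bound.

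There is no real obstacle here: both statements are immediate consequences of Definition \ref{def1} combined with the baseline assumption that $G$ is $\mu$-excellent. The only mild subtlety worth a one-line remark is that the argument for the bound uses condition (i) (existence of some $H_x$ sitting inside a $\mu$-set) rather than condition (ii), and implicitly assumes $V(G)\neq\emptyset$, which is the standing convention for the graphs considered in the paper.
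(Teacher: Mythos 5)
Your proof is correct and is exactly the direct unwinding of Definition \ref{def1} that the paper has in mind; the paper itself offers no written proof, dismissing the observation as obvious. Nothing to add.
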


\begin{observation}  \label{munu}
Let a graph   $G$ be both $\mu$-excellent  and $\nu$-excellent. 
If the set of all  $\mu$-sets and the set of all $\nu$-sets of $G$ coincide, 
then $G\left\langle \mu \right\rangle  =  G\left\langle \nu \right\rangle$.
\end{observation}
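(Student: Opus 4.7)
The plan is to prove the two set-containments $G\langle \mu\rangle \subseteq G\langle \nu\rangle$ and $G\langle \nu\rangle \subseteq G\langle \mu\rangle$ by symmetric arguments that simply unfold Definition \ref{def1}. Because the definition of $H$-$\mu$-excellence refers to $\mu$-sets of $G$ only through the phrases ``$V(H_x)$ is a subset of some $\mu$-set of $G$'' in (i) and ``there is a $\mu$-set of $G$ having $V(H)$ as a subset'' in (ii), the notion depends on $\mu$ solely through the collection $\mathtt{M}_\mu(G)$.

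Concretely, I would fix an arbitrary $H \in \mathcal{I}$ and assume $H \in G\langle \mu\rangle$, i.e.\ $G$ is $H$-$\mu$-excellent. To verify (i) for $\nu$, I would take any $x \in V(G)$, use $\mu$-excellence to produce an induced $H_x \cong H$ with $x \in V(H_x)$ and $V(H_x) \subseteq D$ for some $\mu$-set $D$; by the hypothesis $\mathtt{M}_\mu(G) = \mathtt{M}_\nu(G)$, this same $D$ is a $\nu$-set, so (i) holds for $\nu$ with the same $H_x$. For (ii), given any induced subgraph $H'$ of $G$ isomorphic to $H$, condition (ii) for $\mu$ yields a $\mu$-set containing $V(H')$, which is again a $\nu$-set by hypothesis. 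Thus $H \in G\langle \nu\rangle$, proving $G\langle \mu\rangle \subseteq G\langle \nu\rangle$. Exchanging the roles of $\mu$ and $\nu$ gives the reverse inclusion.

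There is no real obstacle here: the statement is a tautology once one notices that Definition \ref{def1} is phrased entirely in terms of the family of $\mu$-sets, with no further reference to the property $\pi$ or to the invariant $\mu$ itself. The only thing worth flagging in the writeup is that the equality $G\langle\mu\rangle = G\langle\nu\rangle$ is an equality of subsets of $\mathcal{I}$, so the argument should be carried out for a generic $H \in \mathcal{I}$ rather than only for $H \in \mathcal{H}$ for some fixed $\mathcal{H}$.
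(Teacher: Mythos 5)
Your proof is correct and is exactly the routine unfolding of Definition \ref{def1} that the paper has in mind; the paper itself offers no proof, simply declaring the observation obvious, since $H$-$\mu$-excellence depends on $\mu$ only through the family $\mathtt{M}_\mu(G)$. Nothing further is needed.
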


As first examples of $\mathcal{H}$-$\mu$-excellent graphs let us consider the case $\mu = \beta_0$.  
Clearly, any $\beta_0$-excellent graph $G$ is $\{\overline{K_1}, \overline{K_{\beta_0(G)}}\}$-$\beta_0$-excellent.
A graph is $r$-{\em extendable} if every independent set of size $r$ 
is contained in a maximum independent set (Dean and Zito \cite{dz}). 
Clearly,   a graph  is $\{\overline{K_1},\overline{K_2},..,\overline{K_r}\}$-$\beta_0$-excellent  
if and only if  it  is $s$-extendable for all $s=1,2,..,r$.    
 Plummer \cite{p} define a graph $G$ to be {\em well covered} whenever 
$G$ is $k$-extendable for every integer $k$. In other words, 
a graph $G$ is well covered if and only if 
$G\left\langle \beta_0 \right\rangle = \{\overline{K_1},\overline{K_2},..,\overline{K_{\beta_0(G)}}\}$.

In this paper we concentrate mainly on excellent graphs with respect to the domination number $\gamma$.  
We give basic terminologies and notations in the rest of this section. 
In Section $2$ we describe the $\gamma$-excellent
 family of induced subgraphs for some well known graphs.
 In Section $3$ we show  that,  under appropriate restrictions,  
the generalized lexicographic product of graphs has 
the same excellent family of induced subgraphs 
with respect to six domination-related parameters. 
Section 4   contains results on $\gamma$-excellent regular graphs and trees.   
  We conclude in Section 5 with some open problems.

In a graph $G$, for a subset $S \subseteq V (G)$ the {\em subgraph induced} by $S$ is the graph
$\left\langle S \right\rangle$ with vertex set $S$ and  
two vertices in $\left\langle S \right\rangle$ are adjacent if and only if they are adjacent in $G$.
The {\em complement} $\overline{G}$ of $G$ is the graph whose
vertex set is $V (G)$ and whose edges are the pairs of nonadjacent vertices of $G$.
We write $K_n$ for the {\em complete graph} of order $n$ and $P_n$ for the  {\em path} 
on $n$ vertrices. Let $C_m$ denote the {\em cycle} of length $m$.
 For any vertex $x$ of a graph $G$, $N_G(x)$ denotes the set of all neighbors 
of $x$ in $G$, $N_G[x] = N_G(x) \cup \{x\}$  and the degree of $x$ is $deg_G(x) = |N_G(x)|$. 
The {\em minimum} and {\em maximum} degrees
 of a graph $G$ are denoted by $\delta(G)$ and $\Delta(G)$, respectively.
For a subset $S \subseteq V (G)$, let $N_G[S] = \cup_{v \in S}N_G[v]$. 
Let $X \subseteq V(G)$ and $x \in X$. 
The $X$-{\em private neighborhood} of $x$, denoted by $pn_G[x,X]$ or simply by $pn[x,X]$ 
if the graph is clear from the context, is the set $\{y \in V(G) \mid N[y] \cap X = \{x\}\}$. 
A {\em leaf} is a vertex of degree one and a {\em support vertex} is a vertex adjacent to a leaf. 
The $1$-{\em corona}, denoted $cor(U)$, of a graph $U$ is the graph obtained from $U$
 by adding a degree-one neighbor to every vertex of $U$.
An {\em isomorphism} of graphs $G$ and $H$ is a bijection between the vertex sets of $G$ and $H$
$f \colon V(G)\to V(H)$ such that any two vertices $u$ and $v$ of $G$
 are adjacent in $G$ if and only if $f(u)$ and $f(v)$ are adjacent in $H$. 
If an isomorphism exists between two graphs, then the graphs are called isomorphic
 and denoted as $ G\simeq H$. We use the notation $[k]$  for $\{1,2,..,k\}$.

An  {\em independent set}  is a set of vertices in a graph, no two of which are adjacent. 
The {\em independence number} of $G$, denoted $\beta_0(G)$, is the maximum size of an independent set in $G$. 
The {\em independent domination number} of $G$, denoted by $i(G)$, is the minimum size of a maximal independent set in $G$. 
A subset $D \subseteq V(G)$ is called a {\em dominating set} (or a {\em total dominating set}) in $G$, if
for each $x \in  V (G) -D$ (or for each $x \in V (G)$, respectively) there exists a vertex $y \in D$ adjacent to $x$.  
A  dominating set $R$ of a graph $G$  is a {\em restrained dominating set} (or an {\em outer-connected dominating set}) in $G$,    if
every vertex  in $V(G)-R$ is adjacent to a vertex in $V(G)-R$ (or $V(G)-R$ induces a connected graph, respectively). 
The minimum number of vertices of a dominating set in a graph $G$ is  the {\em domination number} $\gamma(G)$ of $G$.  
 Analogously the {\em total domination number} $\gamma_t(G)$,  the {\em restrained domination number} $\gamma_r(G)$  and 
the {\em outer-connected domination number} $\gamma^{oc}(G)$ are defined. 
The minimum cardinality of a set $S$ which is simultaneously total dominating and restrained dominating  in $G$ 
 is called  the  {\em total restrained domination number} $\gamma_{tr}(G)$ of $G$. 
The minimum cardinality of a set $S$ which is simultaneously total dominating and outer-connected dominating  in $G$ 
 is called  the  {\em total outer-connected domination number} $\gamma_{t}^{oc}(G)$ of $G$.

\section{\sc Examples}

Here we find the $\gamma$-excellent family of induced subgraphs of some well known graphs.

\begin{example}\label{e12}  
Let $G$ be a connected graph with $\gamma(G)=2$.
In \cite{jay} it is proved that (in our terminology)  $G$ is $K_2$-$\gamma$-excellent if and only if 
$G$ is a complete $r$-partite graph $K_{n_1,n_2,..,n_r}$, $n_i \geq 2$, $i=1,2,..,r \geq 2$.  
Clearly $K_{2,2,..,2}\left\langle \gamma \right\rangle = \{K_1,K_2, \overline{K_2}\}$ and 
$K_{n_1,n_2,..,n_r}\left\langle \gamma \right\rangle = \{K_1,K_2\}$ when $n_s \geq 3$ for some $s \in [r]$. 
\end{example}

\begin{example}\label{pncr}
Let  $\nu \in \{\gamma, i\}$. Then all the following hold:
\begin{itemize}
\item[(i)] (folklore)  $\nu(P_n) = \left\lceil n/3 \right\rceil$ and $\nu(C_r) = \left\lceil r/3 \right\rceil$. 
                                    $C_r$ is $\nu$-excelent for all $r \geq 3$. 
                                       $P_n$ is $\nu$-excellent if and only if $n=2$ or $n \equiv 1 \pmod 3$. 
\item[(ii)]  $P_n\left\langle \nu \right\rangle = \{K_1\}$ when $n \in \{1,2\} \cup \{7,10,\dots\}$ 
                     and $P_4\left\langle \nu \right\rangle = \{K_1,\overline{K_2}\}$
\item[(iii)] $C_5\left\langle \nu \right\rangle = \{K_1, \overline{K_2}\}$ and 
                     $C_{3r}\left\langle \nu \right\rangle =  C_{5+3r}\left\langle \nu \right\rangle = \{K_1\}$, $r \geq 1$.
\item[(iv)]  $C_7\left\langle \gamma \right\rangle = \{K_1, K_2,\overline{K_2},\overline{K_3}\}$, 
                     and $C_{3r+1}\left\langle \gamma \right\rangle =\{K_1, K_2, \overline{K_2}\}$ for $r \not=2$.
\item[(v)]  $C_7\left\langle i \right\rangle = \{K_1, \overline{K_2},\overline{K_3}\}$
                     and $C_{3r+1}\left\langle i \right\rangle =\{K_1,  \overline{K_2}\}$  for $r \not=2$. 
\end{itemize}
\end{example}

The proof is straightforward and hence we omit it. 
From the above example, one can easily obtain the next result.

\begin{example}\label{disc1}
Let a graph $G$ be an union of $s \geq 2$ paired disjoint cycles $C_{n_1},C_{n_2},..,C_{n_s}$. 
\begin{itemize}
\item[(i)] If $n_i=5$, $i=1,2,..,s$, then $G\left\langle \gamma \right\rangle =  \{\overline{K_1},\overline{K_2},..,\overline{K_{2s}}\}$. 
\item[(ii)] If $n_i=7$, $i=1,2,..,s$, then $G\left\langle \gamma \right\rangle= \{\overline{K_1},\overline{K_2},..,\overline{K_{3s}}\} \cup \{K_2\}$. 
\item[(iii)] If $n_i \not\equiv 1 \pmod 3$ and $n_i \geq 6$ for some $i \in [s]$, then  $G\left\langle \gamma \right\rangle =\{K_1\}$.
\item[(iv)] If $n_i \equiv 1 \pmod 3$  and $n_i \geq 10$ for all $i \in [s]$, then
                    $G\left\langle \gamma \right\rangle = \{K_1, K_2, \overline{K_2}\}$. 
\end{itemize}
\end{example}

Denote by (CEA) the class of all graphs $G$ such that $\gamma(G+e) \not= \gamma(G)$ for all $e \in E(\overline{G})$. 
\begin{example}\label{e2}
Let a noncomplete graph $G$ be in (CEA). 
 It is well known fact that any two nonadjacent vertices of $G$ belong to some $\gamma$-set of $G$ (Sumner and Blitch \cite{sb}).   
In other words,  $G$ is $\{K_1, \overline{K_2}\}$-$\gamma$-excellent graph.  
\end{example}

\begin{proposition}\label{g=b0}
Let $G$ be a graph with $\beta_0(G) = \gamma(G)=s$. 
Then $G$ is $\{\overline{K_1}, \overline{K_2},..,\overline{K_s}\}$-$\gamma$-excellent 
 and \cite{p} $G\left\langle i \right\rangle = G\left\langle \beta_0  \right\rangle= \{\overline{K_1},..,\overline{K_s}\}$. 
\end{proposition}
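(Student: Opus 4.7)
The plan is to combine the standard chain $\gamma(G) \leq i(G) \leq \beta_0(G)$ with the fact that every maximal independent set is a dominating set. Under the hypothesis $\gamma(G)=\beta_0(G)=s$, the middle term is squeezed to $s$, and any maximal independent set $I$ satisfies $s = i(G) \leq |I| \leq \beta_0(G) = s$. Thus every maximal independent set of $G$ has exactly $s$ vertices and is simultaneously a $\gamma$-set (it dominates and has cardinality $\gamma(G)$), an $i$-set, and a $\beta_0$-set. Conversely, every $\beta_0$-set is maximal by size, and every $i$-set is by definition maximal; hence the families of $i$-sets and of $\beta_0$-sets of $G$ coincide, and both are contained in the family of $\gamma$-sets.

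To verify the first assertion, I would fix $k \in [s]$, set $H=\overline{K_k}$, and use the following extension step: given an independent set $J$ of $G$ with $|J| \leq s$, extend $J$ greedily to a maximal independent set $J^*$; by the preceding paragraph $|J^*|=s$ and $J^*$ is a $\gamma$-set containing $J$. Clause (ii) of Definition \ref{def1}, applied with $V(H)=J$, is then immediate. For clause (i), given $x \in V(G)$ I would apply the step to $J=\{x\}$ to obtain a $\gamma$-set $J^*$ of size $s \geq k$ containing $x$; any $k$-subset of $J^*$ containing $x$ is independent, and therefore induces a copy of $\overline{K_k}$ that contains $x$ and lies inside the $\gamma$-set $J^*$.

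For the second assertion, every subset of a $\beta_0$-set is independent, so only graphs of the form $\overline{K_j}$ can belong to $G\langle\beta_0\rangle$, and Observation \ref{mu} restricts $j$ to $[s]$. Running the same extension step, now producing a $\beta_0$-set directly, shows that each $\overline{K_k}$ with $k \in [s]$ does lie in $G\langle\beta_0\rangle$. Since the families of $i$-sets and $\beta_0$-sets of $G$ coincide, Observation \ref{munu} yields $G\langle i\rangle = G\langle\beta_0\rangle$. The only conceptual subtlety—rather than a genuine obstacle—is that a $\gamma$-set of $G$ need not itself be independent (for instance $\{1,2\}$ in $C_4$), so $G\langle\gamma\rangle$ may properly contain $\{\overline{K_1},\ldots,\overline{K_s}\}$; this is why the statement asserts only $\mathcal{H}$-$\gamma$-excellence for this $\mathcal{H}$, rather than an equality for $G\langle\gamma\rangle$.
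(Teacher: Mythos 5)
Your proposal is correct and rests on exactly the same core argument as the paper's (much terser) proof: extend any independent set to a maximal independent set, note that maximal independent sets are dominating, and use $\gamma(G)=\beta_0(G)=s$ (via the chain $\gamma\leq i\leq\beta_0$) to conclude that every maximal independent set is simultaneously a $\gamma$-set, an $i$-set, and a $\beta_0$-set. You simply spell out the verification of clauses (i) and (ii) of Definition \ref{def1} and the identification $G\left\langle i\right\rangle=G\left\langle\beta_0\right\rangle$ that the paper leaves implicit.
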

\begin{proof}
Every independent set of $G$ is a subset of a maximal independent set. 
Since each maximal independent set is always dominating and $\beta_0(G) = \gamma(G)=s$, 
the result immediately follows. 
\end{proof}

The {\em Cartesian product} of two graphs $G$ and $H$ is the graph $G\square H$ whose
vertex set is the Cartesian product of the sets $V (G)$ and $V (H)$. Two vertices
$(u_1, v_1)$ and $(u_2, v_2)$ are adjacent in $G \square H$ precisely when either
 $u_1 = u_2$ and $v_1v_2 \in  E(H)$ or $v_1 = v_2$ and $u_1u_2 \in E(G)$. 
 It is clear from this  definition that $G\square H \simeq H\square G$ and 
if $G$ or $H$ is not connected then $G \square H$ is not connected.

\begin{example}\label{Knn}
Let $G = K_m\square K_n$, $n\geq m \geq 2$. 
Then  $G\left\langle i \right\rangle = G\left\langle \beta_0 \right\rangle= \{\overline{K_1},..,\overline{K_m}\}$.  
If $n > m$, then $G\left\langle \gamma \right\rangle = \{\overline{K_1},..,\overline{K_m}\}$.  
If $n=m$, then 
$G\left\langle \gamma \right\rangle = \{\overline{K_1},..,\overline{K_m}\} \cup \{K_1,K_2,..,K_m\} 
\cup \{K_p \cup \overline{K_q} \mid (p \geq 2) \wedge (q \geq 1) \wedge (p+q \leq m)\}$.
\end{example}

 \begin{proof}
Let $G = K_m\square K_n$, $n\geq m \geq 2$.
 We consider $G$ as an $m \times n$ array of vertices
$\{x_{i,j} \mid (1 \leq i \leq m)  \wedge (1 \leq j \leq n)\}$,   
 where the closed neighborhood of $x_{i,j}$ is the union of 
the sets  $A_i = \{x_{i,1}, x_{i,2},..,x_{i,n}\}$ and $B_j = \{ x_{1,j}, x_{2,j},..,x_{m,j}\}$. 
Then $\left\langle A_i \right\rangle \simeq K_m$ and $\left\langle B_j \right\rangle \simeq K_n$. 
It is well-known that  \cite{gr1} (a) $\gamma(G) = i(G) = \beta_0(G) = m$, 
(b) $A_1,A_2,..,A_m$ are $\gamma$-sets of $G$, 
and if $m=n$, $B_1,B_2,..,B_n$ are also  $\gamma$-sets of $G$. 
Hence, by Proposition \ref{g=b0}, $G$ is $\{\overline{K_1}, \overline{K_2},..,\overline{K_m}\}$-$\gamma$-excellent
and $G\left\langle i \right\rangle = G\left\langle \beta_0 \right\rangle=  \{\overline{K_1},..,\overline{K_m}\}$. 
Suppose that $G$ is $H$-$\gamma$-excellent. 
Then there is a $\gamma$-set $D$   of $G$ such that  $\left\langle D \right\rangle$ 
has an induced subgraph $H_1 \simeq H$.  Assume that $H$ has at least one edge. 
 \medskip

{\em Case} 1: $m < n$. 
 Clearly $|A_i \cap  D|=1$  for all $i=1,2,..,m$. 
Because of symmetry, we assume without loss of generality that 
$D \cap B_j $ is empty for all $j>m$. Define now the set 
$D^t = \{x_{r,s} \mid x_{s,r} \in D\}$.  Since  $H$ is not edgeless, 
 $|D \cap B_j| > 1$ for some $j \leq m$. But  then $|D^t \cap A_j|>1$,   
which means that $D^t$ is not a $\gamma$-set of $G$. 
Since $\left\langle D \right\rangle  \simeq \left\langle D^t \right\rangle$, 
 $G$ is not $H$-$\gamma$-excellent. 
Thus, $G\left\langle \gamma \right\rangle = \{\overline{K_1},..,\overline{K_s}\}$. 
\medskip

{\em Case} 2: $m = n$. 
Obviously in this case  exactly one of  $|A_i \cap  D|=1$  for all $i=1,2,..,m$ 
and $|B_j \cap  D|=1$  for all $j=1,2,..,m$ holds.  Say the first is valid. 
Let $R_1$ be a   $l$-order component   of $\left\langle H \right\rangle$ for some $l \geq 2$. 
For the sake of symmetry, we can  assume  that all  elements of $R_1$ are in $B_1$
 and  $D \subset \cup_{s=1}^pB_s$, where $D \cap B_s$ is not empty for all $s \in [p]$. 
Clearly  $p \leq m-l+1$. 
Suppose that $\left\langle D \right\rangle$  has another nontrivial component. 
Then the difference $m-p$ is not less  than $l$. 
Define the set $D_1 = (D - V(R_1)) \cup \{x_{1,p+1},x_{1,p+2},..,x_{1,p+l}\}$. 
Clearly $D_1$ is not a $\gamma$-set of $G$ and 
$\left\langle D_1 \right\rangle \simeq \left\langle D \right\rangle$. 
Thus $R_1$ is the only nontrivial component of $\left\langle D \right\rangle$. 
Hence $H$ is either a complete graph or a union of complete and edgeless graph. 
Finally, it is easy to see that for each such a graph $H$, 
$G$ is $H$-$\gamma$-excellent. 
\end{proof}

We  need the following  "negative result". 

\begin{theorem}\label{neg}
There is no $P_3$-$\gamma$-excellent graph $G$ with $\gamma(G) = 3$. 
\end{theorem}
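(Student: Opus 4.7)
The plan is to exploit the numerical coincidence $|V(P_3)| = 3 = \gamma(G)$: condition (ii) of Definition \ref{def1} then forces every induced $P_3$ in $G$ to \emph{equal} a $\gamma$-set, and hence to be a dominating set. Condition (i), applied to any single vertex of $G$ (note $V(G) \neq \emptyset$ since $\gamma(G) \geq 3$), ensures that at least one induced $P_3$ exists, so fix one: $a - b - c$ with $ab, bc \in E(G)$ and $ac \notin E(G)$, and set $D = \{a,b,c\}$.

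Next I would extract external private neighbors of $a$ and $b$ with respect to $D$. Since $D$ is a $\gamma$-set and $\gamma(G) = 3$, the subset $D \setminus \{a\} = \{b,c\}$ fails to dominate, so there is a vertex $p$ with $N_G[p] \cap D = \{a\}$; a quick check (using $a \sim b$) rules out $p = a$, and by definition $p \neq b, c$, so $p \in V(G) \setminus D$ is adjacent to $a$ but to neither $b$ nor $c$. The analogous argument applied to $D \setminus \{b\}$ produces $r \in V(G) \setminus D$ adjacent to $b$ but to neither $a$ nor $c$. From $p \sim a$ and $r \not\sim a$ one immediately gets $p \neq r$.

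The key step is then a pivot. The vertex set $\{r, b, c\}$ induces a $P_3$ (the only potential extra edge $rc$ is excluded by the private-neighbor definition of $r$), so by (ii) it must dominate $G$, and in particular it must dominate $p$. Since $p$ is non-adjacent to $b$ and to $c$ and distinct from $r, b, c$, we are forced to conclude that $pr \in E(G)$. But then $\{a, p, r\}$ also induces a $P_3$ (edges $ap$ and $pr$; non-edge $ar$ from the definition of $r$), so by (ii) again it must dominate $G$. However, $c$ is distinct from each of $a, p, r$ and non-adjacent to each of them (from the induced $P_3$ on $D$ together with the private-neighbor definitions of $p$ and $r$), so $c$ is not dominated — the desired contradiction.

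The only delicate part is the bookkeeping in extracting the private neighbors $p, r$ and verifying that they lie in $V(G) \setminus D$ and are distinct; this is the place where one really uses that $a, b, c$ induce precisely a $P_3$ (not, say, a triangle or an independent triple). Once that is in hand, the contradiction drops out from the single combinatorial pivot through $r$ and then through $p$, without any case analysis on the rest of $G$.
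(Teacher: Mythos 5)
Your proof is correct and is essentially the paper's own argument up to relabeling: both extract a private neighbor of the middle vertex and of one end, pivot to the induced $P_3$ obtained by swapping that private neighbor for the opposite end to force an edge between the two private neighbors, and then derive a contradiction because the resulting third induced $P_3$ fails to dominate the remaining end. No gaps; the bookkeeping on the private neighbors is handled exactly as in the paper.
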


\begin{proof}
 Assume that $G$ is a $P_3$-$\gamma$-excellent graph, $\gamma(G) = 3$
 and $x_1,x_2,x_3$ is an induced path in $G$. 
     Since $X = \{x_1,x_2,x_3\}$ is a $\gamma$-set of $G$, 
		there  is $y_i \in pn[x_i, X]$, $i=1,2,3$. 
	Then $\{x_1,x_2,y_2\}$ is a $\gamma$-set of $G$, which implies $y_2y_3 \in E(G)$. 
		But now  no vertex of the induced path $y_2,y_3,x_3$  is adjacent to $x_1$, a contradiction. 
	\end{proof}

\begin{example}\label{compl}
  $\overline{K_3 \square K_n} \left\langle \gamma \right\rangle = 
	\{K_1,K_2,\overline{K_2}, K_1 \cup K_2, \overline{K_3}, K_3\}$ when $n \geq 3$, 
and   $\overline{K_m \square K_n} \left\langle \gamma \right\rangle = 
\{K_1,K_2,\overline{K_2}, K_1 \cup K_2, K_3\}$ when $n \geq m \geq 4$. 
\end{example}
\begin{proof}
First note that $\overline{K_3 \square K_3} \simeq K_3 \square K_3$ and by 
Example \ref{Knn} it immediately follows that $\overline{K_3 \square K_3} \left\langle \gamma \right\rangle = 
	\{K_1,K_2,\overline{K_2}, K_1 \cup K_2, \overline{K_3}, K_3\}$. 
	So, let $n \geq 4$ and $n \geq m \geq 3$. 
	It is well known that \cite{gr1} $\gamma(\overline{K_m \square K_n}) = 3 \leq m = i(\overline{K_m \square K_n})$. 
		Let  us  consider  the graph $G_{m,n} = \overline{K_m \square K_n}$ 
as  a $m \times n$ array  of  vertices $\{a_{i,j} \mid (1 \leq i \leq m )  \wedge  (1 \leq j \leq n)\}$,  
 with  an  adjacency $N(a_{i,j}) = V(G_{m,n} ) - (Y_i \cup Z_j)$,  
where  $Y_i = \cup_{k=1}^{n} \{a_{i,k}\}$ and $Z_j = \cup_{r=1}^{m}\{a_{r,j}\}$. 
Remark now that: 
\begin{itemize}
\item[(a)]    $\left\langle \{a_{i,j}, a_{k,l}, a_{r,s}\} \right\rangle \simeq K_3$
                       if and only if  both $3$-tuples $(i,k,r)$ and $(j,l,s)$ consist of paired distinct integers. 
											 The vertices of each triangle of $G_{m,n}$ form a $\gamma$-set. 
											Every two adjacent vertices  $a_{i,j}$ and $a_{k,l}$ belong to a triangle.
\item[(b)]    All induced subgraphs isomorphic to $K_1 \cup K_2$ are 
                       $\left\langle \{a_{i,j}, a_{k,l}, a_{i,l}\} \right\rangle$ and $\left\langle \{a_{i,j}, a_{k,l}, a_{k,j}\} \right\rangle$, 
											where $i \not=k$ and $j \not= l$. The vertices of each such a subgraph form a $\gamma$-set. 
											Every two vertices belong to an induced subgraph isomorphic to $K_1 \cup K_2$. 
	\item[(c)]  Each $3$-cardinality subset of $Z_j$ is independent and it is not dominating. 
		\end{itemize}

Theorem \ref{neg} together with (a)-(c) immediately lead to the required.
\end{proof}

To continue we need the following theorem and definitions.

\begin{them}\label{lbcp} \cite{ep}
 $\gamma (G \square H) \geq \min \{|V(G)|, |V(H)|\}$ for any two  arbitrary graphs $G$ and $H$.  
\end{them}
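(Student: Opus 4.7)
The plan is to project a putative dominating set onto the two factors of the Cartesian product and do a short case split. Assume without loss of generality that $m := |V(G)| \le n := |V(H)|$, so the task reduces to showing that any dominating set $D$ of $G\square H$ satisfies $|D| \ge m$. The natural partition to consider is by $H$-fibers: for each $u \in V(G)$, set
\[
D_u = D \cap (\{u\} \times V(H)).
\]
These sets are pairwise disjoint and their union is $D$.

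The easy case is when $D_u \ne \emptyset$ for every $u \in V(G)$. Then $|D| = \sum_{u \in V(G)} |D_u| \ge |V(G)| = m$, and we are done. The substantive case is when $D_{u_0} = \emptyset$ for some $u_0 \in V(G)$. Here I would invoke the fact that in $G \square H$ the neighbors of $(u_0,v)$ split cleanly into $H$-type neighbors $(u_0,v')$ with $v'v \in E(H)$ and $G$-type neighbors $(u',v)$ with $u'u_0 \in E(G)$. Since the entire $H$-fiber $\{u_0\}\times V(H)$ is disjoint from $D$, every $(u_0,v)$ must be dominated by a $G$-type neighbor, i.e., by some $(u_v, v) \in D$ with $u_v \in N_G(u_0)$. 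As $v$ ranges over $V(H)$, these witnesses $(u_v,v)$ have pairwise distinct second coordinates, so they are pairwise distinct elements of $D$, giving $|D| \ge |V(H)| = n \ge m$.

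There is not really a serious obstacle here: the whole argument hinges on remembering the exact neighborhood structure of $G\square H$ and on the observation that once one $H$-fiber misses $D$, the domination of that fiber is forced in a one-to-one manner by $V(H)$. The only subtlety worth flagging is that in the second case one must use $D_{u_0} = \emptyset$ both to rule out self-domination of $(u_0,v)$ and to rule out intra-fiber domination by $H$-type neighbors; if one overlooks either, the clean injection $v \mapsto (u_v,v)$ breaks down.
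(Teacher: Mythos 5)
Your proof is correct and complete: the partition of $D$ into its intersections with the $H$-layers, together with the observation that an empty layer forces an injective system of dominators indexed by $V(H)$, is exactly the standard argument for this bound. The paper itself gives no proof --- Theorem \ref{lbcp} is quoted from El-Zahar and Pareek \cite{ep} --- so there is nothing internal to compare against; your write-up would serve as a self-contained justification, and you correctly flag the one point where care is needed (using $D_{u_0}=\emptyset$ to exclude both self-domination and intra-layer domination).
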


A $G$-{\em layer} of the Cartesian product $G \square H$ is the set                               
 $\{(u,y)\mid u \in V(G)\}$,where $y \in V(H)$. 
Analogously an $H$-{\em layer} is the set                                                                              
$\{(x,v) \mid v\in V(H)\}$, 
where $x\in V(G)$. A subgraph of $G \square H$ induced by a $G$-layer or
 an $H$-layer is isomorphic to $G$ or $H$, respectively.

\begin{theorem}\label{existance}
Let $H$ be a connected noncomplete $n$-order graph and $p \geq n \geq 3$.  
If each induced  subgraph of $K_p \square H$ which is  isomorphic to $H$ has as a vertex set some 
 $H$-layer, then  $\gamma(K_p \square H)  = n$ and  $K_p \square H$ is a $H$-$\gamma$-excellent graph. 
\end{theorem}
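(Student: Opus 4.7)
The plan is to establish both claims (the value of $\gamma$ and the $H$-$\gamma$-excellence) by exhibiting $H$-layers as canonical $\gamma$-sets, then reading off both conditions of Definition \ref{def1} from this structural fact.

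First I would compute $\gamma(K_p\square H)$. By Theorem \ref{lbcp}, $\gamma(K_p\square H)\geq \min\{p,n\}=n$ since $p\geq n$. For the matching upper bound, fix an $H$-layer $L_x=\{(x,v)\mid v\in V(H)\}$ with $x\in V(K_p)$. For any vertex $(u,y)\in V(K_p\square H)$, either $u=x$ (so $(u,y)\in L_x$) or $u\neq x$, in which case $ux\in E(K_p)$ yields $(u,y)(x,y)\in E(K_p\square H)$ and hence $(u,y)$ is dominated by $(x,y)\in L_x$. So $L_x$ is a dominating set of cardinality $n$, which gives $\gamma(K_p\square H)\leq n$ and together $\gamma(K_p\square H)=n$. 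In particular, every $H$-layer is a $\gamma$-set, and, as recalled just before the theorem, $\langle L_x\rangle\simeq H$.

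Next I would verify condition (i) of Definition \ref{def1}. Given an arbitrary vertex $z=(u,y)\in V(K_p\square H)$, take the $H$-layer $L_u$ through $u$; then $H_z:=\langle L_u\rangle\simeq H$, $z\in V(H_z)$, and $V(H_z)=L_u$ is a $\gamma$-set by the previous paragraph. So (i) holds. Condition (ii) is immediate from the hypothesis of the theorem: if $H'$ is any induced subgraph of $K_p\square H$ with $H'\simeq H$, then $V(H')$ equals some $H$-layer, hence $V(H')$ is itself a $\gamma$-set of $K_p\square H$ and is (trivially) contained in a $\gamma$-set.

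There is not really a hard step here; the whole argument is driven by the observation that $H$-layers are $\gamma$-sets in the presence of a complete factor $K_p$. The only point worth checking carefully is the combinatorial inequality that turns the lower bound of Theorem \ref{lbcp} into an equality, namely that $p\geq n$ forces $\min\{p,n\}=n$ so that the $H$-layer, of size $n$, is extremal. Everything else is bookkeeping against Definition \ref{def1}, and the noncompleteness and connectedness of $H$ enter only through the standing hypothesis that the only induced copies of $H$ in $K_p\square H$ are $H$-layers.
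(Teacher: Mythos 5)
Your proof is correct and follows essentially the same route as the paper: the $H$-layer gives the upper bound $\gamma(K_p\square H)\leq n$, Theorem \ref{lbcp} with $p\geq n$ gives the matching lower bound, so every $H$-layer is a $\gamma$-set, and both conditions of Definition \ref{def1} follow since every vertex lies in an $H$-layer and, by hypothesis, every induced copy of $H$ is an $H$-layer. You merely spell out the verification of condition (ii) that the paper leaves implicit.
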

\begin{proof}
Each $H$-layer of  $K_p \square H$ is a dominating set  of  $K_n \square H$. 
Hence  $\gamma (K_p \square H) \leq |V(H)|=n$. Since $p \geq n$,  by Theorem \ref{lbcp} 
we  have that each $H$-layer is a $\gamma$-set of  $K_p \square H$. 
It remains to note that clearly each vertex of $K_p \square H$ belongs to some $H$-layer. 
\end{proof}

The  next  example serves as an illustration of the above theorem. 

\begin{example}\label{kpcn}
If $p \geq n \geq 5$, then the graph  $K_p \square C_n$ is  $C_n$-$\gamma$-excellent.
\end{example}
\begin{proof}
Let $H$ be an induced subgraph of $K_p \square C_n$ which is isomorphic to $C_r$. 
It is easy to see that if $H$ is not a $C_n$-layer,   then either $r \in \{3,4\}$ or $r \geq n+2$. 
The required immediately follows by Theorem \ref{existance}. 
\end{proof}

\section{Generalized lexicographic product}

Let $G$ be a  graph with vertex set
 $V(G) = \{\textbf{1}, \textbf{2},..,\textbf{n}\}$ and   let 
 $\Phi = (F_1,F_2,..,F_n)$ be  an ordered $n$-tuple of  paired disjoint graphs. 
Denote by $G[\Phi]$ the graph with  vertex set $\cup_{i=1}^n V(F_i)$ 
and edge set defined as follows: (a) $F_1, F_2,.., F_n$ are induced subgraphs of $G[\Phi]$,  
 and  (b) if $x \in V(F_i)$, $y \in V(F_j)$, $i,j \in [n]$ and $i \not= j$, then 
$xy \in E(G[\Phi])$ if and only if \textbf{ij} $\in E(G)$. 
A graph  $G[\Phi]$ is called  the  {\em generalized lexicographic product}  of $G$ and $\Phi$.   
If  $F_i \simeq F$ for every $i=1,2,..,n$, then  $G[\Phi]$
 becomes the standard lexicographic product  $G[F]$. 
Each subset $U = \{u_1,u_2,..,u_n\} \subseteq V(G[\Phi])$ such that 
                         $u_i \in V(F_i)$, for every $i \in [n]$, is called a $G$-layer.
From the definition of $G[\Phi]$ it immediately follow:
\begin{itemize}
\item[(A)] (folklore) $G[\Phi] \simeq G$  if and only if  $G[\Phi] = G[K_1]$. 
                         $G[F] \simeq F$  if and only if $G \simeq  K_1$.
                        If $G$ has at least  two vertices, then $G[\Phi]$ is connected if and only if $G$ is connected. 
												If $G$ is edgeless, then $G[\Phi] = \cup_{i=1}^nF_i$.
                        For any $G$-layer $U = \{u_1,u_2,..,u_n\}$               
                        the bijection  $f \colon V(G)\to U$ defined by  $f(\textbf{i}) = u_i \in V(F_i)$ is an 
										    isomorphism between $G$ and $\left\langle  U \right\rangle$. 
												For any $x \in V(F_i)$ and $y \in V(F_j)$, $i \not= j$, is fulfilled  
												$dist_{G[\Phi]}(x,y) = dist_G(\textbf{i},\textbf{j})$. 
\end{itemize} 
 The equality $dist_{G[\Phi]}(x,y) = dist_G(\textbf{i},\textbf{j})$ 
will be used in the sequel without specific references.

\begin{theorem}\label{three}
Given a graph $G[\Phi]$, where $G$ is  connected of order $n \geq 2$  and 
$|V(F_k)| \geq 3$ for all $k \in [n]$.  Then 
$G[\Phi]\left\langle \gamma \right\rangle = G[\Phi]\left\langle \gamma_r \right\rangle = G[\Phi]\left\langle \gamma^{oc} \right\rangle$ and 
$G[\Phi]\left\langle \gamma_t \right\rangle = G[\Phi]\left\langle \gamma_{tr} \right\rangle = G[\Phi]\left\langle \gamma_t^{oc} \right\rangle$.
If $\gamma(F_k) \geq 3$ for all $k \in [n]$, then 
$G[\Phi]\left\langle \gamma \right\rangle = G[\Phi]\left\langle \gamma_r \right\rangle = G[\Phi]\left\langle \gamma^{oc} \right\rangle = 
G[\Phi]\left\langle \gamma_t \right\rangle = G[\Phi]\left\langle \gamma_{tr} \right\rangle = G[\Phi]\left\langle \gamma_t^{oc} \right\rangle$.
\end{theorem}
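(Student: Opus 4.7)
The plan is to invoke Observation \ref{munu}: it suffices to show that, under the hypotheses, the families of $\gamma$-sets, $\gamma_r$-sets, and $\gamma^{oc}$-sets of $G[\Phi]$ all coincide (forcing the first chain of excellent-family equalities), and similarly for the total analogues $\gamma_t$, $\gamma_{tr}$, $\gamma_t^{oc}$. For the last clause, I additionally show that $\gamma(F_k)\geq 3$ for every $k$ forces every $\gamma$-set to be total dominating, so the family of $\gamma$-sets equals the family of $\gamma_t$-sets and all six collections merge.

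The central structural lemma I would establish is: \emph{if $D$ is a $\gamma$-set (resp.\ a $\gamma_t$-set) of $G[\Phi]$, then $V(F_k)\not\subseteq D$ for every $k\in[n]$.} Granted this, since $G$ is connected and $n\geq 2$, every layer supplies a vertex outside $D$ and any two consecutive layers along a $G$-path are joined by all-to-all edges in $G[\Phi]$. Hence for $x \in V(F_k)\setminus D$ and any $j\in N_G(k)$, any vertex in the nonempty set $V(F_j)\setminus D$ is a neighbor of $x$ outside $D$; and any two vertices of $V(G[\Phi])\setminus D$ are linked by a path whose intermediate layers follow a $G$-path. So $D$ is simultaneously restrained-dominating and outer-connected-dominating (and, in the $\gamma_t$ case, total-restrained and total-outer-connected). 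Combined with the trivial inequalities $\gamma\leq\gamma_r,\gamma^{oc}$ and $\gamma_t\leq\gamma_{tr},\gamma_t^{oc}$, this yields the equality of all four parameters in each chain and the coincidence of the corresponding families of minimum sets.

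To prove the structural lemma, suppose for contradiction $V(F_k)\subseteq D$ and pick any $j_0\in N_G(k)$. If $V(F_j)\cap D\neq\emptyset$ for some $j\in N_G(k)$, remove any $u\in V(F_k)$ to obtain $D' = D\setminus\{u\}$: then $u$ is dominated by $V(F_j)\cap D'$, while every vertex previously dominated through $u$ lies in some $V(F_{j'})$ with $j'\in N_G(k)$ and is still dominated by $V(F_k)\setminus\{u\}\neq\emptyset$; in the $\gamma_t$ case, each $y\in D'$ retains an in-$D'$ neighbor by a layer-by-layer check. Otherwise $V(F_j)\cap D=\emptyset$ for all $j\in N_G(k)$, and we replace the whole block by setting $D' = (D\setminus V(F_k))\cup\{u,v\}$ with $u\in V(F_k)$ and $v\in V(F_{j_0})$. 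The count $|D'|=|D|-|V(F_k)|+2\leq |D|-1$ uses exactly $|V(F_k)|\geq 3$. Verification that $D'$ (totally) dominates: $V(F_k)$ is dominated by $v$ (and $v$ by $u$); each $V(F_j)$ with $j\in N_G(k)$ is dominated by $u$; and any layer $V(F_l)$ with $l\notin N_G(k)\cup\{k\}$ has no $V(F_k)$-neighbors at all, so it is dominated exactly as before by $D\setminus V(F_k)\subseteq D'$. Either way, $|D'|<|D|$ contradicts the minimality of $D$.

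For the last clause, assume $\gamma(F_k)\geq 3$ for every $k$ and let $D$ be a $\gamma$-set of $G[\Phi]$. If some $x\in D\cap V(F_k)$ has no $D$-neighbor, then $N_{F_k}(x)\cap D=\emptyset$ and $V(F_j)\cap D=\emptyset$ for all $j\in N_G(k)$; the second condition forces $D\cap V(F_k)$ to be a dominating set of $F_k$, whence $|D\cap V(F_k)|\geq\gamma(F_k)\geq 3$, and the same replacement $D'=(D\setminus V(F_k))\cup\{u,v\}$ produces a dominating set with $|D'|\leq|D|-1$, contradicting minimality. Thus every $\gamma$-set is total dominating, so $\gamma(G[\Phi])=\gamma_t(G[\Phi])$ and the two families of minimum sets coincide; combined with the previous two chains and Observation \ref{munu}, this gives the six-way equality. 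The main obstacle is the careful verification in the second sub-case of the structural lemma, where deleting a whole layer and putting back only two vertices must be shown not to lose domination of any far-away layer --- this is precisely where the all-to-all adjacency between $G$-adjacent layers and the hypothesis $|V(F_k)|\geq 3$ combine to give the strict size decrease.
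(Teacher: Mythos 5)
Your proposal is correct and follows essentially the same route as the paper: both hinge on an exchange argument showing that no minimum (total) dominating set can swallow a whole layer $V(F_k)$ (the paper pins down $|D\cap V(F_k)|\le 2$, you prove the slightly weaker but equally sufficient $V(F_k)\not\subseteq D$), then use the complete join between $G$-adjacent layers to conclude that $V(G[\Phi])-D$ induces a connected graph, so every minimum set is automatically restrained and outer-connected. Your treatment of the final clause (using $\gamma(F_k)\ge 3$ to force every $\gamma$-set to be total dominating) likewise matches the paper's, just phrased as a direct replacement contradiction rather than via the bound $|D\cap V(F_k)|\le 1$.
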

\begin{proof}
Let $\mu \in \{\gamma, \gamma_t\}$  and $D$ a $\mu$-set of $G[\Phi]$.  
Assume there is $i \in [n]$ such that $V(F_i) \cap D = \{v_1,v_2,..,v_r\}$, where $r \geq 2$.  
Then clearly for each $\textbf{j} \in N(\textbf{i})$, $V(F_j) \cap D$ is empty and for 
any $u_j \in V(F_j)$ the set $(D-\{v_2,..,v_r\}) \cup \{u_j\}$ is a dominating set of $G[\Phi]$ 
or a total dominating set of $G[\Phi]$ depending on whether $\mu = \gamma$ or 
$\mu = \gamma_t$, respectively.  Hence $r=2$.  Since $G$ is connected of order $n \geq 2$ and 
$|V(F_i)| \geq 3$ for all $i \in [n]$, the graph $\left\langle V(G[\Phi]) - D \right\rangle$ is connected. 
Therefore the first two equality chains are correct. 

Finally, let $D_1$ be a $\gamma$-set of $G[\Phi]$ and  $\gamma(F_k) \geq 3$ for all $k \in [n]$. 
Then clearly for every $i \in [n]$ the sets $D$ and $V(F_i)$  must have no more than one element in common. 
But this immediately implies that $D_1$ is a total dominating set of $G[\Phi]$. 
Thus, the last equality chain holds. 
\end{proof}

\begin{theorem}\label{complete}
Given a graph $G[\Phi]$, where $G$ is  connected of order $n \geq 2$  and  $F_k$ is complete
with $|V(F_k)| \geq 2$ for all $k \in [n]$.  Then $G[\Phi]$ is $\overline{K_s}$-$\gamma$-excellent 
if and only if $G$ is $\overline{K_s}$-$\gamma$-excellent.  
\end{theorem}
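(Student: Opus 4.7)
The plan is to exploit the rigid fiber structure of $G[\Phi]$ forced by each $F_k$ being complete: both $\gamma$-sets and independent sets of $G[\Phi]$ intersect each $V(F_k)$ in at most one vertex, so they lift and project cleanly between $G$ and $G[\Phi]$.

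First I would record two structural facts. Since $F_k$ is complete, any two vertices of $V(F_k)$ share the same closed neighborhood in $G[\Phi]$, so a $\gamma$-set of $G[\Phi]$ cannot contain two of them by minimality; writing $\pi(D) = \{\mathbf{k} : V(F_k) \cap D \neq \emptyset\}$, the projection $\pi(D)$ is then a dominating set of $G$ of size $|D|$, giving $\gamma(G[\Phi]) \geq \gamma(G)$. Conversely, a $\gamma$-set $S$ of $G$ lifts, by picking one $u_k \in V(F_k)$ for each $\mathbf{k} \in S$, to a dominating set of $G[\Phi]$ of the same size. Hence $\gamma(G[\Phi]) = \gamma(G)$, and the $\gamma$-sets of $G[\Phi]$ are precisely the lifts of $\gamma$-sets of $G$. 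Moreover, an $s$-set $T \subseteq V(G[\Phi])$ with at most one vertex per fiber is independent in $G[\Phi]$ if and only if $\pi(T)$ is independent in $G$, directly from the edge definition of the generalized lexicographic product.

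For the forward direction, suppose $G[\Phi]$ is $\overline{K_s}$-$\gamma$-excellent. For condition (i) of Definition \ref{def1} on $G$, fix $\mathbf{i} \in V(G)$ and any $x \in V(F_i)$: (i) for $G[\Phi]$ yields a $\gamma$-set $D$ containing an induced $\overline{K_s}$ through $x$; its $s$ vertices lie in distinct fibers, and their projection is an independent $s$-set of $G$ through $\mathbf{i}$ inside the $\gamma$-set $\pi(D)$. For (ii), any independent $s$-set $I \subseteq V(G)$ lifts to an induced $\overline{K_s}$ in $G[\Phi]$ by choosing one vertex per fiber; (ii) for $G[\Phi]$ encloses this lift in a $\gamma$-set $D$, and $\pi(D) \supseteq I$ is the required $\gamma$-set of $G$.

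For the converse, assume $G$ is $\overline{K_s}$-$\gamma$-excellent. Given $x \in V(F_i)$, apply (i) for $G$ to obtain an independent $s$-set through $\mathbf{i}$ inside some $\gamma$-set $S$ of $G$; then lift $S$ by taking $x$ itself as the representative of $V(F_i)$ and arbitrary representatives on the other fibers indexed by $S$, producing a $\gamma$-set of $G[\Phi]$ containing an induced $\overline{K_s}$ through $x$. For (ii), an induced $\overline{K_s}$ in $G[\Phi]$ necessarily occupies distinct fibers (since each $F_k$ is complete), projects to an independent $s$-set of $G$, is enclosed by (ii) for $G$ in a $\gamma$-set $S$, and one lifts $S$ by taking the prescribed vertices on the fibers meeting the given $\overline{K_s}$ and arbitrary representatives on the remaining fibers of $S$. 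I do not foresee a serious obstacle; the one bookkeeping point worth flagging is that in each lift the resulting set has cardinality exactly $|S| = \gamma(G) = \gamma(G[\Phi])$, so it is genuinely a $\gamma$-set and not merely a dominating set.
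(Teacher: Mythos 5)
Your proof is correct and follows essentially the same route as the paper's: your projection/lift correspondence between $\gamma$-sets (and independent sets) of $G$ and of $G[\Phi]$ is exactly the content of the paper's Claim~1, which is phrased instead in terms of $G$-layers (transversals meeting each fiber once). The only cosmetic difference is that you justify $|D \cap V(F_k)| \leq 1$ via the shared closed neighborhoods within a fiber, which is a slightly more explicit version of the paper's appeal to completeness of the $F_k$.
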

\begin{proof} Recall that any $G$-layer of $G[\Phi]$ induces a graph isomorphic to $G$.  We need the following claim. 
\medskip
\\
{\bf Claim 1.}  (i) Each  $\gamma$-set $D$ of $G[\Phi]$ is contained in a $G$-layer of $G[\Phi]$;
                           moreover, $D$ is a $\gamma$-set of each subgraph of $G[\Phi]$ that is induced by 
													a $G$-layer  containing $D$. 
														(ii) If $D^*$ is a $\gamma$-set of some subgraph of $G[\Phi]$ that is induced by 
													      a $G$-layer,  then $D^*$ is a $\gamma$-set of $G[\Phi]$. 
\noindent
\begin{proof}[Proof of Claim 1.]
If $D$ is a  $\gamma$-set of $G[\Phi]$, then since all $F_i$'s are complete $|D \cap V(F_i)| \leq 1$ for all $i \in [n]$. 
But then $D$ is a dominating set of any subgraph of $G[\Phi]$ that is induced by a $G$-layer containing $D$. 
In particular this leads to $\gamma(G[\Phi]) \leq \gamma(G)$.

If $D^*$  is a $\gamma$-set of some subgraph of $G[\Phi]$ that is induced by  a $G$-layer,  
then again by the fact that all $F_i$'s are complete, it follows that $D^*$  is a dominating set of $G[\Phi]$. 
This clearly leads to  $\gamma(G[\Phi]) \geq \gamma(G)$. 

Thus $\gamma(G[\Phi]) = \gamma(G)$ implying the required. 
\end{proof}

$\Leftarrow$ Choose $u \in V(G[\Phi])$ arbitrarily. Then there is a $G$-layer $U$ containing $u$. 
                           Since $G$ is $\overline{K_s}$-$\gamma$-excellent, there is a  $\gamma$-set $D^*$
													of $\left\langle U \right\rangle$ that contains $s$ paired nonadjacent vertices
													one of which is $u$. By  Claim 1, $D^*$ is a 			$\gamma$-set of $G[\Phi]$.  
													
													If $R$ is a $s$-vertex independent set  in $G[\Phi]$, then since all $F_i$'s are complete graphs, 
													$R$ is a subset of some $G$-layer.  The rest is as above.  

$\Rightarrow$ Let $L = \{l_1,l_2,..,l_n\}$ be a $G$-layer of $G[\Phi]$, where $l_i \in V(F_i)$, $i \in [n]$. 
                              Choose $l_r \in L$ arbitrarily. 
                             Since $G[\Phi]$ is $\overline{K_s}$-$\gamma$-excellent, there is an $s$-vertex independent set $I_s$ 
														of $G[\Phi]$ and a $\gamma$-set $D$ of $G[\Phi]$ such that $u \in I_s \subseteq D$. 
														By Claim 1, $D$ is a $\gamma$-set of some subgraph induced by a $G$-layer of $G[\Phi]$.
														 Since all $F_i$'s are complete, without loss of generality, we can assume that $D \subseteq L$. 
														
														 Let $R$ be a $s$-vertex independent set of $L$. Then there is a $\gamma$-set $D_1$ of $G[\Phi]$ which 
														has $R$ as a subset. By Claim 1 $D_1$ is a $\gamma$-set of a graph induced by some  													
                            $G$-layer and as above we can assume that $D_1 \subseteq L$. 
\end{proof}

\section{Regular graphs and trees} 
To present the next results on regular graphs, we need the following theorem. 

\begin{them}\label{degree} 
Let $G$ be a $n$-order graph and minimum degree $\delta$. Then 
$\gamma(G) \leq n\delta / (3\delta  -1)$ when $\delta \in \{3,4,5\}$ 
(see \cite{reed}, \cite{sy} and  \cite{xsc}, respectively).     
\end{them}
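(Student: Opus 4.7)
The plan is to handle each of the three cases $\delta\in\{3,4,5\}$ separately, since the three cited sources use substantially different techniques; the bound $n\delta/(3\delta-1)$ is known to be tight (or very nearly so) in each case, so an elementary unified argument does not exist.

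For $\delta=3$ I would follow a path-cover strategy of the type introduced by Reed. First, partition $V(G)$ into vertex-disjoint paths. Under the hypothesis $\delta\ge 3$ and a little work, one can arrange for every path in the cover to have at least some minimum length, ruling out the degenerate pieces (singletons, $K_2$'s, short $P_3$'s) that would otherwise inflate the count. Then, since a path on $k$ vertices is dominated by $\lceil k/3\rceil$ vertices, a long path contributes to the total at the asymptotic ratio $1/3$; the slack between $1/3$ and $3/8$ is used to absorb endpoint losses on shorter paths. The key structural lemma one needs is that the path cover can be chosen so that every path either has length at least~$8$ or admits an inter-path edge allowing a single vertex of the cover to dominate two endpoints at once.

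For $\delta=4,5$ the same philosophy applies, but the path-cover must be refined by a more global structural decomposition. A natural first attempt is the probabilistic method: include each vertex in $D$ independently with probability $p$, then append one vertex for each as-yet-undominated vertex. This yields $\gamma(G)\le n(1+\ln(\delta+1))/(\delta+1)$, which for $\delta=4,5$ falls short of the target $4n/11$ and $5n/14$ respectively. To close the gap one needs either a post-processing step that identifies local ``wasteful'' substructures and replaces them, or a genuine discharging argument; the Xing--Sun--Chen proof for $\delta=5$ proceeds along the latter line, assigning initial charges to vertices and redistributing them across local configurations forced by $\delta\ge 5$.

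The main obstacle throughout is pinning down the denominator to exactly $3\delta-1$ rather than the weaker $\delta+1$ available from straightforward probabilistic or greedy arguments. This forces one to classify the extremal configurations---graphs achieving equality or near-equality in the bound, typically built from a small family of blocks---and then to exclude them via a delicate case analysis whose length grows quickly with $\delta$. Since each of the three cited papers devotes substantial effort to exactly this step, my proposal is to treat the statement as a composite of three separate non-trivial arguments, invoked here as a black box rather than re-proved in-line.
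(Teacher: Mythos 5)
This statement is quoted from the literature and the paper supplies no proof of its own---it simply cites Reed, Sohn--Yuan, and Xing--Sun--Chen for the cases $\delta=3,4,5$ respectively, exactly as you ultimately propose to do by invoking the three results as black boxes. Your sketch of the underlying techniques (Reed's vertex-disjoint path cover for $\delta=3$ and its refinements for $\delta=4,5$) is a reasonable description of those sources, but it is not needed here; deferring to the citations is the same approach the paper takes.
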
  

For any $5$-regular graph $G$ with $\gamma(G) = 3$, 
the bound stated in Theorem \ref{degree} can be improved by $3$.

\begin{proposition}\label{3-5-10} 
Let $G$ be a $5$-regular graph with $\gamma(G) = 3$.  Then $n \geq 12$. 
\end{proposition}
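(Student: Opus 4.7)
The plan is to argue by contradiction, assuming $G$ is a $5$-regular graph with $\gamma(G)=3$ and $n \le 11$. Since $G$ is $5$-regular, $n$ is even, and Theorem~\ref{degree} applied with $\delta=5$ yields $\gamma(G) \le 5n/14$, so $n \ge 9$. Parity then forces $n=10$, and the remainder of the plan is devoted to excluding this case.

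For $n=10$, the identity $|N[x]\cup N[y]| = 12 - |N[x]\cap N[y]|$ (valid because $|N[x]| = 6$) shows that a pair $\{x,y\}$ dominates $V(G)$ exactly when $|N[x]\cap N[y]|=2$. Thus $\gamma(G) \ge 3$ is equivalent to the two local conditions: \textbf{(E)} every edge of $G$ lies in a triangle, and \textbf{(N)} every non-edge of $G$ has at least three common neighbors. Setting $e_v := |E(G[N(v)])|$ and $T_v := V(G)\setminus N[v]$ (so $|T_v|=4$), the double counts $\sum_{x\in N(v)}|N(x)\cap N(v)| = 2e_v$ and $\sum_{x\in N(v)}|N(x)\cap T_v| = 20-2e_v$ combine with (E) and (N) to give $2e_v \ge 5$ and $20-2e_v \ge 12$, whence $e_v \in\{3,4\}$ for every $v$. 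Enumerating $5$-vertex graphs with no isolated vertex then pins down $G[N(v)]\cong P_3\cup K_2$ when $e_v=3$, and $G[N(v)]$ is one of $P_5$, $K_{1,4}$, $K_3\cup K_2$, or the $5$-vertex tree of maximum degree $3$ when $e_v=4$; moreover, equality in the $T_v$-count when $e_v=4$ forces $G[T_v]\cong C_4$ with each $u\in T_v$ having exactly three neighbors in $N(v)$.

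The final step is to derive a contradiction from each of these local patterns. As the representative subcase, suppose some $v$ satisfies $e_v=4$ with $G[N(v)]\cong K_{1,4}$, center $c$ and leaves $\ell_1,\ldots,\ell_4$. Then $N[c]=N[v]$, so $c$ has no neighbor in $T_v$; hence the three $N(v)$-neighbors of each $u\in T_v$ lie in $\{\ell_1,\ldots,\ell_4\}$. Combined with the forced $C_4$ on $T_v$, this produces a bijection $\sigma\colon\{1,\ldots,4\}\to T_v$ with $\ell_i\not\sim\sigma(i)$, and a direct computation using the $C_4$ edges gives $|N(\ell_i)\cap N(\sigma(i))| = 2$, contradicting (N). Analogous short arguments — each starting from an edge inside $G[N(v)]$ and applying (N) to a carefully chosen non-edge into $T_v$ — rule out the remaining configurations $P_5$, $K_3\cup K_2$, the $5$-vertex tree of maximum degree $3$ (for $e_v=4$), and $P_3\cup K_2$ (for $e_v=3$). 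I expect the main obstacle to be organizing this finite but intricate case analysis cleanly rather than any single deep observation.
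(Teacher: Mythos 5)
Your reduction to the single case $n=10$ (via Theorem~\ref{degree} and the parity of $5$-regular graphs) is exactly what the paper does. From that point the two arguments diverge completely: the paper disposes of $n=10$ by citing Meringer's exhaustive list of the sixty $5$-regular graphs of order $10$ and verifying that each has domination number $2$, whereas you attempt a self-contained structural refutation. A correct hand proof here would be a genuine improvement, since the paper's step is essentially a computer check. Your framework is sound: the equivalence of $\gamma\geq 3$ with the two local conditions (every edge in a triangle; every non-adjacent pair with at least three common neighbours — note the domination criterion should read $|N[x]\cap N[y]|\leq 2$, not $=2$, though the conditions you extract are right), the bound $e_v\in\{3,4\}$, the list of five possible graphs $\left\langle N(v)\right\rangle$, and the rigidity $\left\langle T_v\right\rangle\simeq C_4$ when $e_v=4$ are all correct, and your $K_{1,4}$ subcase is a complete contradiction.

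The gap is that four of the five configurations are only asserted to fall to ``analogous short arguments,'' and they are not uniformly analogous. The $K_3\cup K_2$ case does collapse quickly (condition (N) applied to the pairs $\{a,d\}$ with $a$ in the triangle and $d$ in the $K_2$ forces every $T_v$-neighbour of the triangle into $N(d)\cap N(e)$, which is too small), but the $P_5$ and chair cases cannot be killed by condition (N) on non-edges inside $N(v)\cup T_v$ alone: one can build adjacency patterns satisfying all those constraints, and the contradiction only appears when condition (E) is invoked for edges \emph{between} $N(v)$ and $T_v$ or inside the $C_4$ — a type of step your worked example never uses. The $e_v=3$ case is worse still, since there the degrees of the vertices of $T_v$ into $N(v)$ are not forced to be $3$ each (the distribution can be $(3,3,3,5)$ or $(3,3,4,4)$) and $\left\langle T_v\right\rangle$ is not forced to be $C_4$, so the rigidity your argument leans on is absent and a further case split is required. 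Until each of these four configurations is actually eliminated, the proof of the key step $n\neq 10$ is incomplete.
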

\begin{proof}
By Theorem \ref{degree} we have  $n \geq 9$.  
  Since  there is no $5$-regular graphs of odd order,  $n \geq 10$ is even. 
	Note that there are exactly sixty $5$-regular graphs of order $10$ \cite{merGT, mer}. 
 Their  adjacency lists can be found in  \cite{mer}. 
A simple verification shows that each of these graphs has  the domination number equals to $2$. 
\end{proof}

\begin{figure}[htbp]
	\centering
		\includegraphics{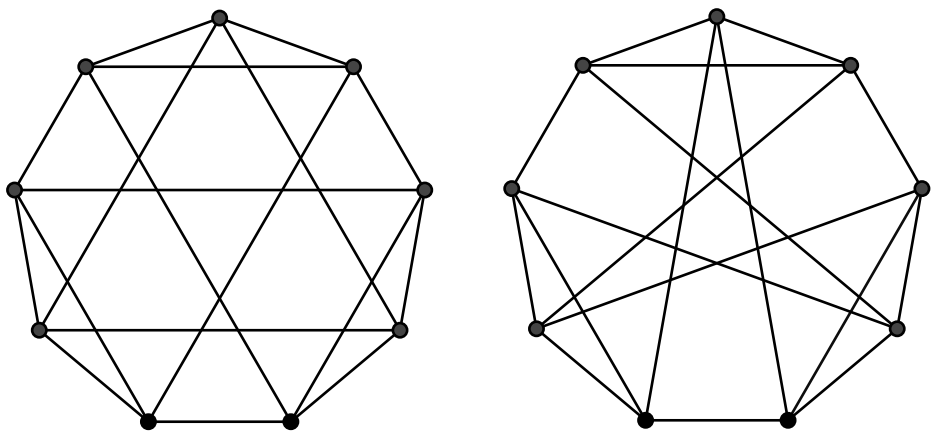}
	\caption{The two $4$-regular $K_3$-$\gamma$-excellent  graphs of order $9$. The graph  on the right is $K_3 \square K_3$. }
	\label{fig: reg9}
\end{figure}

\begin{theorem}\label{reg11}
Let $G$ be a $s$-regular $K_r$-$\gamma$-excellent $n$-order  connected graph 
with $\gamma(G)=r$, where $n > s \geq r \geq 3$. 
Then the following assertions hold. 
\begin{itemize}
\item[(i)]  $n \leq r(s-r+2)$. 
\item[(ii)] If $r=3$, then $s \geq 4$ with equality if and only if $n=9$ and 
                  $G$ is one of the graphs depicted in Fig.\ref{fig: reg9}.
\item[(iii)]  If $r=3$ and  $s=5$, then  $n = 12$.  	
 \end{itemize}
\end{theorem}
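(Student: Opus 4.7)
\noindent
The plan is to handle all three items in one sweep. For \textbf{(i)} I would pick a $K_r$-inducing $\gamma$-set $D=\{v_1,\dots,v_r\}$, available by $K_r$-$\gamma$-excellence. Each $v_i$ has $r-1$ neighbors inside $D$, so $|N_G(v_i)\setminus D|=s-r+1$. Since $D$ dominates $V(G)\setminus D$,
\[
n-r \;=\; |V(G)\setminus D| \;\le\; \sum_{i=1}^{r}|N_G(v_i)\setminus D| \;=\; r(s-r+1),
\]
yielding $n\le r(s-r+2)$. This immediately gives \textbf{(iii)}: $n\le 3(5-3+2)=12$ while Proposition~\ref{3-5-10} gives $n\ge 12$, so $n=12$. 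It also disposes of the easy parts of \textbf{(ii)}: if $r=s=3$ then (i) forces $n\le 6$, whereas Theorem~\ref{degree} with $\delta=3$ yields $\gamma(G)\le 3n/8\le 9/4$, contradicting $\gamma(G)=3$; hence $s\ge 4$. For $r=3,\,s=4$, (i) gives $n\le 9$ and Theorem~\ref{degree} with $\delta=4$ gives $3\le 4n/11$, so $n\ge 9$, forcing $n=9$.

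To classify the $n=9$ graphs I would fix a triangle $\gamma$-set $D=\{v_1,v_2,v_3\}$. Equality in (i) means that the sets $N_G(v_i)\setminus D$ for $i=1,2,3$ are pairwise disjoint and cover $W:=V(G)\setminus D$; write $N_G(v_i)\setminus D=\{w_{i,1},w_{i,2}\}$, so $N_G(w)\cap D$ is a single vertex for every $w\in W$. Because $G$ is $4$-regular, every $w\in W$ has exactly three neighbors in $W$, and hence $\langle W\rangle$ is a cubic graph on six vertices; up to isomorphism there are only two such graphs, namely $K_{3,3}$ and the triangular prism $C_3\square K_2$. The $K_3$-$\gamma$-excellence now demands that every $w$ lie in a triangle that is a $\gamma$-set: such a triangle either meets $D$ in $v_i$ alone and so equals $\{v_i,w_{i,1},w_{i,2}\}$ (forcing $w_{i,1}w_{i,2}\in E(G)$), or lies entirely in $\langle W\rangle$. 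A case analysis over the two possibilities for $\langle W\rangle$ and the admissible placements of the three private pairs — together with clause (ii) of Definition~\ref{def1}, which requires every induced $K_3$ to be a $\gamma$-set — should leave exactly the two graphs of Figure~\ref{fig: reg9}, one of which is $K_3\square K_3$. The reverse implication reduces to checking that both pictured graphs are $4$-regular, $K_3$-$\gamma$-excellent and satisfy $\gamma=3$.

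\noindent
\textbf{Main obstacle.} The heart of the proof is the classification step of (ii): once $\langle W\rangle$ is known to be cubic on six vertices and the attachment of $D$ is reduced to embedding the three private pairs into $W$, I still have to check that only the two attachments shown in Figure~\ref{fig: reg9} satisfy the full $K_3$-$\gamma$-excellence, including clause (ii). Parts (i) and (iii), as well as the determination of $n=9$ in (ii), fall out cleanly from combining the bound $n\le r(s-r+2)$ with Theorem~\ref{degree} and Proposition~\ref{3-5-10}.
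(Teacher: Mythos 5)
Your treatment of (i), (iii), and the numerical part of (ii) coincides with the paper's: the paper also bounds $n\le r+r(s-r+1)$ by letting a dominating copy of $K_r$ (which exists by $K_r$-$\gamma$-excellence, since $\gamma(G)=r$) dominate everything outside itself, then combines this with Theorem~\ref{degree} to rule out $s=3$ and to force $n=9$ when $s=4$, and with Proposition~\ref{3-5-10} to get $n=12$ in (iii). The one place you diverge is the classification of the order-$9$ graphs in (ii). The paper does not do any structural analysis there: it simply invokes Meringer's catalogue, noting that there are exactly $16$ connected $4$-regular graphs of order $9$, and checks each against the $K_3$-$\gamma$-excellence condition, keeping only the two graphs of Fig.~\ref{fig: reg9}. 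Your alternative — observing that equality in (i) makes the outside-neighborhoods of a dominating triangle partition $W$, so $\langle W\rangle$ is cubic on six vertices and hence $K_{3,3}$ or the prism, and then analysing the attachment of the three private pairs subject to clause (ii) of Definition~\ref{def1} — is a sensible, more self-contained route that avoids appealing to an external enumeration, but as written it is only a plan: you explicitly leave the case analysis over the two candidates for $\langle W\rangle$ and the pairings unexecuted, so that step is incomplete in your proposal (just as the paper's ``immediate verification'' is a finite check left to the reader, anchored to a concrete published list). To make your version a complete proof you would need to carry that case analysis through, including verifying the converse direction that both pictured graphs really are $4$-regular, have $\gamma=3$, and are $K_3$-$\gamma$-excellent.
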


\begin{proof}
(i)  Let  $H \simeq K_r$ be a subgraph of $G$. Each vertex of $H$ is adjacent to 
$s-r+1$ vertices outside $V(H)$. Hence  $n \leq r + r(s-r+1) = r(s-r+2)$.

(ii)  Since $r=3$, we have $\gamma(G) =3$ and $n \leq 3s-3$.
     By Theorem \ref{degree} we obtain $8 \leq n$ when $s=3$ 
			and  $9 \leq n$ when $s \geq 4$. 
			Thus $s \geq 4$ and if the equality holds, then $n=9$. 
			There are exactly $16$ $4$-regular graphs of order $9$ \cite{mer}. 
      An immediate verification shows that among them  only  the graphs depicted in Fig.\ref{fig: reg9} 
		  are $K_3$-$\gamma$-excellent. 
			
(iii) 			By (i), $n \leq 12$ and by Proposition \ref{3-5-10} , $n \geq 12$. 
 \end{proof}

Note that  the connected $5$-regular $K_3$-$\gamma$-excellent graph  depicted in Fig.  \ref{fig:12} has  order $12$.

\begin{figure}[htbp]
	\centering
		\includegraphics{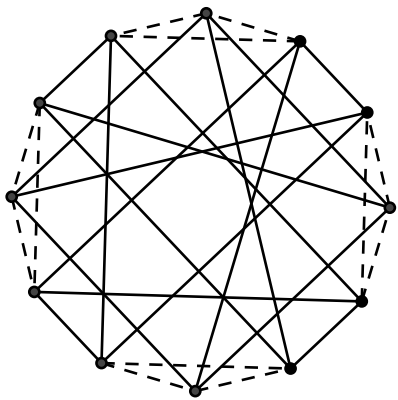}
	\caption{a $5$-regular  $K_3$-$\gamma$-excellent connected graph on $12$ vertices}
	\label{fig:12}
\end{figure}

Now we concentrate on graphs having cut-vertices. 	
				
Let $G_1, G_2,..,G_k$ be pairwise disjoint connected graphs of order 
at least $2$ and $v_i \in V(G_i)$, $i=1,2,..,k$. Then the   {\em coalescence} 
 $(G_1 {\cdot}  G_2 {\cdot}...{\cdot} G_k) (v_1, v_2,.., v_k:v)$
 of $G_1,G_2,...,G_k$  via $v_1, v_2,..,v_k$, 
is the graph obtained from the union of $G_1,G_2,..,G_k$   by identifying $v_1, v_2,.., v_k$
in a vertex labeled $v$. 
If for graphs $G_1, G_2,..,G_k$ is fulfilled $V(G_i) \cap V(G_j) = \{x\}$ when $i,j = 1,2,..,k$ and $i \not= j$, 
then the {\em coalescence}  $(G_1 {\cdot}  G_2 {\cdot}...{\cdot} G_k) (x)$
 of $G_1,G_2,..,G_k$ via $x$ is the union of $G_1, G_2,..,G_k$.

Define  $V^-(G)  = \{x\in V(G) \mid \gamma(G-x) < \gamma(G)\}$ and 
  $V^=(G)  = \{x\in V(G) \mid \gamma(G-x) = \gamma(G)\}$. It is well known that 
$V^-(G)  = \{x\in V(G) \mid \gamma(G-x) +1 = \gamma(G)\}$.  
To continue we need the following result: 

\begin{lema}\label{exc2} \cite{bcd}
Let  $G = (F \cdot H)(x)$.  Then   $x \in V^-(G)$ if and only if $x \in V^-(F) \cap  V^-(G)$. 
Furthermore, if $x \in V^-(G)$, then $\gamma(G) = \gamma(F) + \gamma(H) -1$. 
\end{lema}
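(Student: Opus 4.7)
The plan is to reduce the lemma to a few elementary facts about how dominating sets interact with the cut-vertex $x$ of the coalescence $G=(F\cdot H)(x)$. (We read the statement as the manifestly intended ``$x\in V^-(F)\cap V^-(H)$''.)

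The first building block I would establish is structural: since $V(F)\cap V(H)=\{x\}$ and no edge of $G$ joins $V(F)\setminus\{x\}$ to $V(H)\setminus\{x\}$, we have $G-x=(F-x)\cup(H-x)$ as a disjoint union, so $\gamma(G-x)=\gamma(F-x)+\gamma(H-x)$. Next, for any vertex $y$ of any graph $K$ we have $\gamma(K-y)\geq\gamma(K)-1$, because adjoining $y$ to a $\gamma$-set of $K-y$ yields a dominating set of $K$; applied to $F$ and $H$ this gives $\gamma(F-x)\geq\gamma(F)-1$ and $\gamma(H-x)\geq\gamma(H)-1$. Finally I would prove the asymmetric upper bound $\gamma(G)\leq\gamma(F-x)+\gamma(H)$ and its $F$--$H$ swap: take a $\gamma$-set $A'$ of $F-x$ and a $\gamma$-set $B$ of $H$; then $A'$ dominates $V(F)\setminus\{x\}$ while $B$ dominates $V(H)$ (in particular $x$), and since $A'\subseteq V(F)\setminus\{x\}$ and $B\subseteq V(H)$ the two sets are disjoint, giving $\gamma(G)\leq|A'\cup B|=\gamma(F-x)+\gamma(H)$.

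With these tools both directions are short. For ``$\Rightarrow$'', assume $x\in V^-(G)$; then $\gamma(G)=\gamma(G-x)+1=\gamma(F-x)+\gamma(H-x)+1$, and the asymmetric upper bound forces $\gamma(H-x)\leq\gamma(H)-1$, whence $\gamma(H-x)=\gamma(H)-1$ and $x\in V^-(H)$. The symmetric argument gives $x\in V^-(F)$, and substituting $\gamma(F-x)=\gamma(F)-1$ and $\gamma(H-x)=\gamma(H)-1$ into the identity for $\gamma(G)$ yields the ``furthermore'' clause $\gamma(G)=\gamma(F)+\gamma(H)-1$. For ``$\Leftarrow$'', assume $x\in V^-(F)\cap V^-(H)$; the upper bound then reads $\gamma(G)\leq\gamma(F)+\gamma(H)-1$, while the matching lower bound $\gamma(G)\geq\gamma(F)+\gamma(H)-1$ is proved by taking any $\gamma$-set $D$ of $G$ and checking that $(D\cap V(F))\cup\{x\}$ and $(D\cap V(H))\cup\{x\}$ dominate $F$ and $H$ respectively. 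Combined with $\gamma(G-x)=\gamma(F)+\gamma(H)-2$, this gives $\gamma(G)=\gamma(G-x)+1$, i.e.\ $x\in V^-(G)$.

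The only real bookkeeping subtlety I anticipate is in that last lower bound: naively summing $|(D\cap V(F))\cup\{x\}|+|(D\cap V(H))\cup\{x\}|$ costs two extra units when $x\notin D$ and yields only $\gamma(G)\geq\gamma(F)+\gamma(H)-2$. The fix is to observe that when $x\notin D$, the vertex $x$ is dominated by some vertex of $D$ lying either in $V(F)\setminus\{x\}$ or in $V(H)\setminus\{x\}$ (say the former); then $D\cap V(F)$ already dominates $F$ without needing $x$, saving the missing unit. That is the main (and essentially only) obstacle in the argument.
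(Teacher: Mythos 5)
Your proof is correct, but note that the paper offers no proof of this statement at all: Lemma~\ref{exc2} is quoted verbatim from the reference \cite{bcd} (Brigham, Chinn and Dutton), so there is no in-paper argument to compare yours against. Your reading of the statement is the right one --- ``$x\in V^-(F)\cap V^-(G)$'' is a typo for ``$x\in V^-(F)\cap V^-(H)$'', as the symmetry of the coalescence makes clear. On the merits, your three building blocks are all sound: the disjoint-union identity $\gamma(G-x)=\gamma(F-x)+\gamma(H-x)$, the general inequality $\gamma(K-y)\ge\gamma(K)-1$, and the asymmetric bound $\gamma(G)\le\gamma(F-x)+\gamma(H)$, and the two directions follow from them exactly as you describe. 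You also correctly identified the one genuine subtlety, namely that the naive count in the lower bound $\gamma(G)\ge\gamma(F)+\gamma(H)-1$ loses a unit when $x\notin D$, and your fix (whichever side of the cut-vertex dominates $x$ already dominates its whole block without charging for $x$) is the standard and correct repair; as a bonus, that lower bound holds for every coalescence, with the hypothesis $x\in V^-(F)\cap V^-(H)$ needed only for the matching upper bound. In short: a complete and self-contained elementary proof of a fact the paper imports as a black box.
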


\begin{theorem}\label{dwe}
Let $G = (G_1 {\cdot}  G_2 {\cdot}...{\cdot} G_k) (x)$,   $x \in V^-(G)$ and 
$G_i$ is $H$-$\gamma$-exellent, $i=1,2,..,k$, where $H$ is connected and has no cut-vertex. 
Then $G$ is also $H$-$\gamma$-excellent.  
\end{theorem}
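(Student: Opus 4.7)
The plan is to reduce everything to the $H$-$\gamma$-excellence of the individual $G_i$ by showing two things: every induced copy of $H$ in $G$ lives inside a single $G_i$, and every $\gamma$-set of any $G_i$ extends to a $\gamma$-set of $G$.

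First I would apply Lemma \ref{exc2} iteratively to the coalescence (peeling off one factor at a time), obtaining $x \in V^-(G_i)$ for every $i\in[k]$ and
$$\gamma(G) = \sum_{i=1}^k \gamma(G_i) - (k-1).$$
Since $x\in V^-(G_j)$, each $G_j - x$ admits a minimum dominating set $D_j^{\ast}$ of size $\gamma(G_j)-1$.

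The next step is an \emph{extension lemma}: for any $\gamma$-set $D_i$ of $G_i$ and any choice of sets $D_j^{\ast}$ as above, the set $D = D_i \cup \bigcup_{j\neq i} D_j^{\ast}$ is a $\gamma$-set of $G$. The arithmetic works out because $V(G_i)\cap V(G_j) = \{x\}$ and $x\notin D_j^{\ast}$, so the union is disjoint and $|D| = \gamma(G_i) + \sum_{j\neq i}(\gamma(G_j)-1) = \gamma(G)$. For domination, the key point is that $D_i$ dominates $x$ within $G_i$ (either $x\in D_i$ or $x$ has a neighbor in $D_i$), while each $D_j^{\ast}$ dominates $V(G_j)\setminus\{x\}$, so every vertex of $G$ is covered.

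Now the hypothesis that $H$ is connected and cut-vertex-free enters through the following observation: any induced subgraph $H'\simeq H$ of $G$ has $V(H')\subseteq V(G_i)$ for some single index $i$. Otherwise $V(H')$ would meet $V(G_j)\setminus\{x\}$ for at least two indices $j$, and connectedness of $H'$ together with the fact that $x$ is the only shared vertex between distinct $G_j$'s would force $x\in V(H')$ to be a cut-vertex of $H'$, contradicting the hypothesis on $H$.

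Finally I would assemble both conditions of Definition \ref{def1}. For (ii), a given induced copy $H'$ of $H$ in $G$ lies inside some $G_i$; by $H$-$\gamma$-excellence of $G_i$ its vertex set sits in some $\gamma$-set $D_i$ of $G_i$, which the extension lemma promotes to a $\gamma$-set of $G$ containing $V(H')$. For (i), any $v\in V(G)$ belongs to some $G_i$ (using that $x$ lies in every $G_i$), and condition (i) for $G_i$ produces an induced $H_v\simeq H$ in $G_i$ — hence in $G$, since each $G_i$ is induced in the coalescence — that contains $v$ and sits inside a $\gamma$-set of $G_i$, which is extended as before. The main obstacle I anticipate is the cut-vertex step: without the assumption that $H$ has no cut-vertex, an induced copy of $H$ could straddle several factors and there would be no reason for it to lie inside any single $G_i$, so the reduction to the factorwise excellence would fail.
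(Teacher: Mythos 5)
Your proposal is correct and follows essentially the same route as the paper's proof: iterate Lemma \ref{exc2} to get $x\in V^-(G_i)$ for all $i$ and $\gamma(G)=\sum_i\gamma(G_i)-k+1$, observe that connectedness and the absence of cut-vertices confine any induced copy of $H$ to a single factor, and extend a suitable $\gamma$-set of that factor by $\gamma$-sets of the $G_j-x$. Your write-up is in fact slightly more complete, since you explicitly verify condition (i) of Definition \ref{def1} and spell out why the assembled set dominates, steps the paper leaves implicit.
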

\begin{proof} 
Using induction on $k$ we easily obtain from  Lemma \ref{exc2} that  $\{x\} =  V^-(G_1)  \cap V^-(G_2) \cap ...\cap V^-(G_k)$ and 
$\gamma(G) =  \gamma (G_1) + \gamma (G_2) +..+ \gamma (G_k) - k + 1$.
Consider any induced subgraph $R$ of $G$, which  is isomorphic to $H$. 
Since $H$ is connected and without  cut-vertices, $R$ is an induced subgraph of some $G_i$, 
say without loss of generality, $i=1$.  Then there is a $\gamma$-set $D_1$ of $G_1$ 
for which $R$ is an induced subgraph of  $\left\langle D_1 \right\rangle$. 
Let $D_i$ be a $\gamma$-set of $G_i - x$, $i=2,3,..,k$. 
Since  $x \in V^-(G_i)$,  $|D_i| = \gamma(G_j) - 1$. Then 
$D = \cup_{i=1}^kD_i$   is a $\gamma$-set of $G$ and $R$ is an induced subgraph of $\left\langle D \right\rangle$. 
\end{proof}

Define a {\em vertex labeling}  of a tree $T$  as a function $S:V (T ) \rightarrow\{0,1\}$. 
A labeled tree $T$ is denoted by a pair $(T , S)$. 
 Let $\textbf{0}_T$ and $\textbf{1}_T$ be the sets of vertices assigned the values $0$ and  $1$, respectively. 
	In a {\em labeled} $1$-{\em corona tree}  	of order at least  four all  its leaves
	are in $\textbf{0}_G$ and all  its support vertices form $\textbf{1}_G$. 

Let $\mathscr{T}$ be the family of labeled trees $(T, S)$ that can be obtained from a sequence of labeled trees 
$\tau: (T^1, S^1), \dots, (T^j, S^j)$, ($j \geq 1$),  
such that  $(T^1, S^1)$ is a  labeled   $1$-corona tree of order at least  four 
and $(T, S) = (T^j, S^j)$, and, if $j \geq 2$,  $(T^{i+1}, S^{i+1})$ can be obtained recursively from $(T^i, S^i)$ 
 by  the following operation:

{\bf Operation}   {$O$}.   The labeled tree $(T^{i+1},S^{i+1})$ is obtained from 
vertex disjoint  $(T^i,S^i)$ and a labeled     $1$-corona tree $G_i$ 
in such a way that  $T^{i+1}= (T^i \cdot G_i)(u,v:u)$, 
where (a) $u \in \textbf{0}_{T^i}$,  $v \in \textbf{0}_{G_i}$ and $u \in \textbf{0}_{T^{i+1}}$, 
and (b) $\textbf{0}_{T^{i+1}} = \textbf{0}_{T^i} \cup \textbf{0}_{G_i}-\{v\}$ 
 and $\textbf{1}_{T^{i+1}} = \textbf{1}_{T^i} \cup \textbf{1}_{G_i}$.

 Now we are in a position to present a (reformulated) 
constructive characterization of $\gamma$-excellent trees.

\begin{them}\label{exc}\cite{sam2}
For any tree $T$ of order at least four the following are equivalent: 
\begin{itemize}
\item[(i)] $T$ is $\gamma$-excellent. 
\item[(ii)] There is labeling $S:V (T ) \rightarrow\{0,1\}$ such that 
                    $(T , S)$ is in $\mathscr{T}$. 
\end{itemize}
Moreover, if $(T , S)$ is in $\mathscr{T}$, then $\textbf{0}_{T}= V^-(T)$, $\textbf{0}_{T}$  is a $\gamma$-set of $T$ 
and $\textbf{1}_{T}= V^=(T)$.  In particular, all leaves of $T$ are in $V^-(T)$. 
\end{them}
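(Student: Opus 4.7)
The plan is to prove this equivalence by induction in both directions, with the ``moreover'' statements tracked alongside the constructive step.

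For the direction (ii)$\Rightarrow$(i), I would induct on the length $j$ of the construction sequence for $(T,S) \in \mathscr{T}$. In the base case $j=1$, $T = cor(U)$ is a labeled $1$-corona tree of order $\geq 4$. Here I would verify directly that: (a) the leaf-set $\textbf{0}_T$ is a dominating set of cardinality $|U|=\gamma(T)$, hence a $\gamma$-set; (b) for every support $s$, the swap $(\textbf{0}_T - \{\ell_s\}) \cup \{s\}$ (where $\ell_s$ is the leaf adjacent to $s$) is again a $\gamma$-set, so supports lie in $V^=(T)$; (c) every leaf lies in $\textbf{0}_T$ itself and satisfies $\gamma(T-\ell) < \gamma(T)$, putting it in $V^-(T)$. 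For the inductive step on $(T^{i+1},S^{i+1}) = (T^i \cdot G_i)(u,v:u)$ with $u \in \textbf{0}_{T^i}$ and $v \in \textbf{0}_{G_i}$, I would apply the inductive hypothesis to $T^i$ and the base case to $G_i$. Lemma \ref{exc2} gives $\gamma(T^{i+1}) = \gamma(T^i)+\gamma(G_i)-1$ since $u \in V^-(T^i) \cap V^-(G_i)$. Combining $\gamma$-sets of $T^i$ and $G_i$ that meet only at $u$ produces a $\gamma$-set of $T^{i+1}$; moreover, for any target vertex $x$, I choose a $\gamma$-set on whichever side contains $x$ (keeping $u$ there if possible, or shifting through Lemma \ref{exc2}) and glue it to a $\gamma$-set on the other side to realize $x$ in a $\gamma$-set of $T^{i+1}$.

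For (i)$\Rightarrow$(ii), I would induct on $|V(T)|$. The base case $|V(T)|=4$ reduces to checking that only $P_4 = cor(P_2)$ is $\gamma$-excellent among trees of order $4$, and it is already a labeled $1$-corona. For the inductive step, I would take a longest path $v_0 v_1 v_2 \dots v_k$ of $T$ and examine the support structure near $v_0$. Using $\gamma$-excellence (every leaf belongs to some $\gamma$-set, hence every leaf is in $V^-(T)$), together with the constraint that a longest path forces $v_1$ to be a support vertex and $v_2$ to carry at most a collection of ``cor-shaped'' neighbors, one identifies a subtree $G \cong cor(U)$ rooted at some $u \in N(v_2) \cup \{v_2\}$ that can be peeled off. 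More precisely, I would isolate a vertex $u$ such that $T = (T' \cdot G)(u, v:u)$ for some $v \in \textbf{0}_G$ and some $\gamma$-excellent residual tree $T'$; then apply the inductive hypothesis to $T'$ (verifying it is $\gamma$-excellent and $u \in \textbf{0}_{T'}$) and append the operation $O$.

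The main obstacle will be this peeling step: one must guarantee both that a $1$-corona subtree attached at a vertex $u \in V^-(T)$ always exists at the end of a diametral path, and that the residual tree $T' = T - (V(G) - \{u\})$ remains $\gamma$-excellent with $u \in V^-(T')$. I would handle this by a careful case analysis of the neighborhoods of $v_1, v_2$: using the private-neighbor structure of $\gamma$-sets containing $v_2$ and the exchange property forced by $\gamma$-excellence, one shows that the children of $v_2$ other than $v_1$ are either leaves or themselves supports of pendants, which is exactly the $1$-corona pattern, and that $\gamma(T) = \gamma(T') + \gamma(G) - 1$ via Lemma \ref{exc2}. Finally, the ``moreover'' assertions follow by tracking the labels through each Operation $O$: the inductive identifications $\textbf{0}_T = V^-(T)$ and $\textbf{1}_T = V^=(T)$ are preserved because Lemma \ref{exc2} characterizes $V^-$-membership at the coalesced vertex, and leaves of each appended $G_i$ inherit the base-case verification.
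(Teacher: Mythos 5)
First, a point of comparison: the paper does not prove Theorem \ref{exc} at all --- it is imported verbatim from \cite{sam2} as a lettered, cited theorem. So there is no in-paper proof to measure your argument against; I can only judge the proposal on its own terms.

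Your template (induction on the construction sequence for (ii)$\Rightarrow$(i); peeling a $1$-corona off the end of a diametral path for (i)$\Rightarrow$(ii)) is the standard one for constructive characterizations of trees, and the (ii)$\Rightarrow$(i) half is essentially routine, modulo one piece of bookkeeping you gloss over: Lemma \ref{exc2} controls $V^-$-membership only of the coalesced vertex $u$, so to propagate $\textbf{0}_{T^{i+1}}=V^-(T^{i+1})$ and $\textbf{1}_{T^{i+1}}=V^=(T^{i+1})$ you still must compute $\gamma(T^{i+1}-x)$ for every $x\neq u$, which the lemma as stated does not do for you. The genuine gap is in (i)$\Rightarrow$(ii). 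You write that $\gamma$-excellence gives ``every leaf belongs to some $\gamma$-set, hence every leaf is in $V^-(T)$.'' That inference is invalid: belonging to a $\gamma$-set does not imply belonging to $V^-$, even for leaves of trees. Concretely, let $T$ be the spider obtained from $K_{1,3}$ by subdividing each edge once, with legs $c$--$m_i$--$\ell_i$, $i=1,2,3$. Then $\{\ell_1,m_2,m_3\}$ is a $\gamma$-set containing the leaf $\ell_1$, yet $\gamma(T-\ell_1)=3=\gamma(T)$, so $\ell_1\notin V^-(T)$. (This $T$ is not $\gamma$-excellent, consistent with the theorem --- but that is exactly the point: ``all leaves of a $\gamma$-excellent tree lie in $V^-$'' is a nontrivial consequence of excellence that must be proved, not read off from excellence.) Since your peeling step leans on this fact, and further defers to ``a careful case analysis'' the claims that a $1$-corona can always be split off at a diametral path, that the residue $T'$ is again $\gamma$-excellent with the attachment vertex in $V^-(T')$, and that $\gamma(T)=\gamma(T')+\gamma(G)-1$, the forward direction --- which is where essentially all of the content of this theorem lives --- has not actually been carried out. (You would also need to treat residual trees of order less than four separately in the induction.) A similar conflation appears in your base case, where exhibiting a support vertex in a $\gamma$-set is taken as showing it lies in $V^=$; that particular computation is easy to repair, but the pattern of inferring $V^-$/$V^=$ membership from $\gamma$-set membership is what breaks the argument.
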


Another constructive characterization of the $\gamma$-excellent trees  can be found in \cite{bs}. 
To prove our last result we need the following lemma.

\begin{lemma}\label{bridge} 
Let  $G$ be a connected graph and  $x \in V^-(G)$.  
\begin{itemize}
\item[(i)] If $xy$ is a bridge in $G$, then   no $\gamma$-set of $G$ contains both $x$ and $y$. 
\item[(ii)]If $xy$ and $xz$ are bridges in $G$, then no  $\gamma$-set of $G$ contains both $y$ and $z$. 
 \end{itemize}
\end{lemma}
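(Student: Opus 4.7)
The plan is to prove both parts by contradiction, constructing in each case a dominating set of $G$ of cardinality $\gamma(G)-1$.

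For (i), suppose that $D$ is a $\gamma$-set of $G$ containing both $x$ and $y$. Let $A,B$ be the components of $G-xy$ with $x\in V(A)$, $y\in V(B)$, and write $D_A=D\cap V(A)$, $D_B=D\cap V(B)$. Because $xy$ is the only edge between $A$ and $B$, each vertex of $V(A)\setminus\{x\}$ (respectively $V(B)\setminus\{y\}$) can only be dominated from inside $A$ (respectively $B$), so $D_A$ dominates $A$ and $D_B$ dominates $B$. Together with the trivial bound $\gamma(G)\le\gamma(A)+\gamma(B)$ coming from a union of $\gamma$-sets, this forces $\gamma(G)=\gamma(A)+\gamma(B)$ and hence $|D_A|=\gamma(A)$, $|D_B|=\gamma(B)$. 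Since $G-x$ is the disjoint union of $A-x$ and $B$, the hypothesis $x\in V^-(G)$ translates into $\gamma(A-x)=\gamma(A)-1$. Then, for any $\gamma$-set $D_A^-$ of $A-x$, the set $D_A^-\cup D_B$ has cardinality $\gamma(G)-1$ and dominates $G$: the vertices of $V(A)\setminus\{x\}$ are dominated by $D_A^-$, those of $V(B)$ by $D_B$, and $x$ itself by its bridge-neighbor $y\in D_B$. This contradicts the minimality of $\gamma(G)$.

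For (ii), suppose $D$ is a $\gamma$-set with $y,z\in D$. Part (i) applied to the bridge $xy$ immediately forces $x\notin D$. Let $A,B,C$ be the components of $G-\{xy,xz\}$ with $x\in V(A)$, $y\in V(B)$, $z\in V(C)$, and set $D_A=D\cap V(A)$, $D_B=D\cap V(B)$, $D_C=D\cap V(C)$. The same bridge analysis, together with $x\notin D$, shows that $D_A$ dominates $A-x$, while $D_B$ dominates $B$ and $D_C$ dominates $C$. Combining $\gamma(G)\le\gamma(A)+\gamma(B)+\gamma(C)$, the trivial $\gamma(A)\le\gamma(A-x)+1$, and the identity $\gamma(G-x)=\gamma(A-x)+\gamma(B)+\gamma(C)=\gamma(G)-1$ forces equality throughout, giving $\gamma(A)=\gamma(A-x)+1$ and $\gamma(G)=\gamma(A)+\gamma(B)+\gamma(C)$. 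Consequently the total excess $(|D_A|-\gamma(A-x))+(|D_B|-\gamma(B))+(|D_C|-\gamma(C))$ equals exactly~$1$.

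A short case split on which of these three excess terms carries the unit of slack then finishes the argument: in every case at least one of $D_B,D_C$ is itself a $\gamma$-set of its own component and contains the corresponding vertex $y$ or $z$. Combining a $\gamma$-set $D_A^-$ of $A-x$ with an arbitrary $\gamma$-set of the remaining component and with that distinguished $\gamma$-set of the component through $y$ or $z$ produces a dominating set of $G$ of cardinality $\gamma(A-x)+\gamma(B)+\gamma(C)=\gamma(G)-1$, in which $x$ is dominated for free via the bridge $xy$ or $xz$; this again contradicts the minimality of $\gamma(G)$.

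The main obstacle is the bookkeeping in part (ii): one has to check, in each of the three distributions of the single unit of excess, that a $\gamma$-set of $B$ or $C$ containing $y$ or $z$ is actually available. Once that case analysis is recorded, both parts reduce to the single principle already used in (i), namely, replace the portion of $D$ lying on the side of $x$ by a $\gamma$-set of the corresponding component with $x$ deleted, and then exploit a bridge at $x$ to dominate $x$ at no extra cost.
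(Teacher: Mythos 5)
Your proof is correct, but it takes a noticeably different route from the paper's, especially in part (ii). The paper views $G$ as a coalescence $(F\cdot H)(x)$ with $x$ a leaf of $F$ and $y\in V(F)$, imports the identity $\gamma(G)=\gamma(F)+\gamma(H)-1$ from the cited Lemma on coalescences, and derives the contradiction by \emph{lower-bounding} $|D|$: in (i) the set $D\cap V(F)$ is a non-minimum dominating set of $F$ because the leaf $x$ is redundant next to $y$, and in (ii) the disjointness $x\notin D$ makes $D\cap V(F)$ and $D\cap V(H)$ full dominating sets of their blocks, so $|D|\ge\gamma(F)+\gamma(H)=\gamma(G)+1$ in two lines. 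You instead re-derive the needed additivity of $\gamma$ across the bridge(s) from scratch (so your argument is self-contained, not relying on the coalescence lemma) and obtain the contradiction by \emph{constructing} a dominating set of size $\gamma(G)-1$. The price is paid in (ii): by splitting $G-\{xy,xz\}$ into three components rather than keeping the asymmetric two-block split (with $z$'s side absorbed into $x$'s block), you need the excess-counting argument and the case analysis to guarantee that one of $D_B,D_C$ is a minimum dominating set of its component containing the bridge-neighbor of $x$. That case analysis does close correctly --- exactly one of the three nonnegative excesses equals $1$, so at least one of the $B$- and $C$-excesses vanishes --- but the paper's two-block counting avoids it entirely. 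In short: your version buys independence from the external lemma at the cost of a longer part (ii).
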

\begin{proof}
(i)  Clearly, we can consider $G$ as a coalescence $(F\cdot H)(x)$,
       where without loss of generality, $y \in V(F)$ and $x$ is a leaf of $F$.  
       Suppose $D$ is a $\gamma$-set of $G$ and $x,y \in D$. 
       Then $D \cap V(H)$ and $D \cap V(F)$ are dominating sets of $H$ and $F$, respectively.          
        Moreover, since $x$ is a leaf in $F$, $D \cap V(F)$ is not a $\gamma$-set of $F$. 
       Hence $|D| = |D \cap V(H)| + |D \cap V(F)| - 1 \geq \gamma(H) + (\gamma(F) +1) -1 $, 
        a contradiction with Lemma \ref{exc2}. 
				
(ii) Let as in (i), $G=(F\cdot H)(x)$, $y \in V(F)$ and $x$ is a leaf of $F$. 
      Hence $z \in V(H)$.    
       Let  	$D$ be a $\gamma$-set of $G$ and $y,z \in D$. 
			By (i), $x \not\in D$ and then $D \cap V(H)$ and $D \cap V(F)$ are dominating sets of $H$ and $F$, respectively.
		  This implies $|D| = |D \cap V(H)| + |D \cap V(F)| \geq \gamma(H) + \gamma(F)$, 
        a contradiction with Lemma \ref{exc2}. 
\end{proof}

\begin{theorem}
 Let $T$ be a $\gamma$-excellent tree of order at least four.  
\begin{itemize}
\item[(a)] If $T$ has a cut-vertex belonging to $V^-(T)$, then   $T\left\langle \gamma \right\rangle = \{K_1\}$.
\item[(b)] If no cut-vertex of $T$ is in $V^-(T)$, then  $T$ is a $1$-corona tree and 
                    $T\left\langle \gamma \right\rangle = \{\overline{K_1},..,\overline{K_r}\}$, where $2r = |V(T)|$. 
\end{itemize}
 \end{theorem}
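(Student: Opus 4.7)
For part (b), the plan is to first apply the constructive characterization in Theorem \ref{exc} to show that the hypothesis forces $T$ to be a $1$-corona tree. Indeed, if $(T,S) \in \mathscr{T}$ arises from at least one application of Operation $O$, then the identified vertex of the last such operation carries label $0$, has degree at least two, and separates two subtrees, so it is a cut-vertex belonging to $\textbf{0}_T = V^-(T)$, contradicting the hypothesis. Hence $T = T^1 = cor(U)$ with $r := |V(U)|$ and $|V(T)| = 2r$. Next, describe all $\gamma$-sets of $T$: since each pendant leaf $l_u$ must be dominated by $u$ or $l_u$, every $\gamma$-set has the form $D_\sigma = \{\sigma(u) : u \in V(U)\}$ with $\sigma(u) \in \{u, l_u\}$. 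Thus a set $W \subseteq V(T)$ lies in some $\gamma$-set if and only if $W$ contains no pendant pair $\{u, l_u\}$. This yields $\overline{K_k} \in T\left\langle \gamma \right\rangle$ for $k = 1, \ldots, r$, since an independent set never contains adjacent pairs. Conversely, for any $H$ having an edge and isomorphic to an induced subgraph of $T$, pick a leaf $b$ of $H$ with unique neighbor $a$ and produce an induced $H$-copy in which $a \mapsto u$ and $b \mapsto l_u$ for an appropriate $u \in V(U)$, using a vertex-swap if $l_{f(a)}$ is already occupied; the resulting copy contains the pendant pair $\{u, l_u\}$, so it lies in no $\gamma$-set, giving $H \notin T\left\langle \gamma \right\rangle$.

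For part (a), let $v$ be a cut-vertex in $V^-(T)$ and let $w_1, w_2$ be neighbors of $v$ in distinct components of $T - v$. A direct consequence of Lemma \ref{bridge} is that every $\gamma$-set $D$ of $T$ satisfies $|D \cap (\{v\} \cup N_T(v))| = 1$: if $v \in D$ then $D$ contains no neighbor of $v$ by part (i); if $v \notin D$ then $D$ dominates $v$ so at least one neighbor of $v$ lies in $D$, but at most one by part (ii). Therefore every induced subgraph $H^*$ of $T$ with $V(H^*)$ contained in a $\gamma$-set has $|V(H^*) \cap (\{v\} \cup N_T(v))| \leq 1$. Now suppose, for contradiction, that some $H \in T\left\langle \gamma \right\rangle$ has $|V(H)| \geq 2$; the strategy is to exhibit an induced $H$-copy $H^*$ in $T$ with $|V(H^*) \cap (\{v\} \cup N_T(v))| \geq 2$, contradicting the above together with condition (ii) of Definition \ref{def1}. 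Since $H$ is a forest with at least two vertices, either $H$ has an edge or $H = \overline{K_k}$ for some $k \geq 2$. In the first case, pick a leaf $b$ of $H$ with neighbor $a$ and extend the partial map $a \mapsto v$, $b \mapsto w_1$ to a full induced embedding, using the tree decomposition $T = (T_1 \cdot T_2)(v)$ given by Lemma \ref{exc2} and a vertex-swap to avoid collisions. In the second case, construct an independent $k$-set of $T$ containing $\{w_1, w_2\}$: for $k = 2$ the pair itself works, and for $k \geq 3$ apply condition (i) of Definition \ref{def1} at $w_1$ to obtain an independent $k$-set $I \ni w_1$ inside some $\gamma$-set, observe $I \setminus \{w_1\} \subseteq V(T) \setminus N_T[v]$ by the key identity, then swap $w_1$ for $w_2$, using that $V(T_1) \setminus \{v\}$ and $V(T_2) \setminus \{v\}$ have no edges between them.

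The main obstacle is the construction of the induced $H$-copies in the edge case of each part and the set extension in the edgeless case of part (a). The embedding steps require placing a prescribed edge of $H$ onto a designated bridge of $T$ while keeping the map injective and the image induced; this is handled by a vertex-swap exploiting the existence of a leaf of $H$. The extension in part (a), $k \geq 3$, is delicate when the witness $\gamma$-set has support on both sides of the cut-vertex, and is resolved by using the tree structure around $v$ to pick the swap that preserves independence.
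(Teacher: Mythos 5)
Your part (b) is essentially sound: the derivation that $T$ must be a $1$-corona tree from Theorem \ref{exc}, the explicit description of the $\gamma$-sets of $cor(U)$ as transversals of the pendant pairs, and the leaf-swap that forces any $H$ with an edge to have an induced copy containing a pendant pair are all correct (the swap works because the image of a leaf of $H$ can always be replaced by the degree-one neighbor $l_\alpha$ of the image $\alpha$ of its support, and if $l_\alpha$ already lies in the copy you are done immediately). This is a more self-contained route than the paper, which instead first proves, for every $\gamma$-excellent tree, that $T\left\langle \gamma \right\rangle$ contains only edgeless graphs.

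Part (a), however, has two genuine gaps. First, in the edge case you assert that the partial map $a \mapsto v$, $b \mapsto w_1$ extends to a full induced embedding of $H$. Nothing guarantees this: the hypothesis $H \in T\left\langle \gamma \right\rangle$ only gives, via condition (i) of Definition \ref{def1}, \emph{some} copy of $H$ through $v$ inside a $\gamma$-set, and by your own key identity that copy must meet $N_T[v]$ only in $v$, i.e.\ $v$ sits at an \emph{isolated} vertex of that copy. For connected $H$ this already yields the contradiction with no embedding needed; but for $H$ with both an edge and an isolated vertex (say $H = K_2 \cup \overline{K_5}$) there is no reason an induced copy with the edge placed on $vw_1$ exists at all — the subtrees hanging at $v$ may simply be too small. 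The paper avoids this by taking an arbitrary copy $R$ inside a $\gamma$-set, locating an edge $xy$ of $R$ with $x$ a leaf of $R$, and swapping $x$ for a $V^-(T)$-neighbor $z$ of $y$ (which exists since $y \in V^=(T)$ and $V^-(T)$ dominates); the new copy contains the bridge $yz$ with $z \in V^-(T)$ and Lemma \ref{bridge}(i) kills it. That adaptive, local swap is what your fixed-target embedding cannot replace. Second, in the edgeless case with $k \geq 3$ your construction does not produce what you need: replacing $w_1$ by $w_2$ in $I$ yields a set containing $w_2$ but not $w_1$, whereas the contradiction requires an independent $k$-set containing \emph{both} $w_1$ and $w_2$; moreover, even a swap of some other element of $I$ for $w_2$ is not justified, since $w_2$ may be adjacent to several vertices of $I$ lying in its own component of $T - v$ (the ``no edges between $V(T_1)\setminus\{v\}$ and $V(T_2)\setminus\{v\}$'' observation does not control these). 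The paper's route is to use that $V^-(T) = \mathbf{0}_T$ is an independent $\gamma$-set and that $w_1, w_2$ have no neighbors in $V^-(T) - \{v\}$ (a consequence of the construction in Theorem \ref{exc}, where every vertex of $\mathbf{1}_T$ has exactly one neighbor in $\mathbf{0}_T$), so that $\bigl(V^-(T) - \{v\}\bigr) \cup \{w_1, w_2\}$ is an independent set of size $\gamma(T)+1 \geq k$ from which the desired $k$-subset can be chosen. You would need to import some such structural fact to close this case.
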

 \begin{proof}
Suppose $T$ is $H$-$\gamma$-excellent where $H$ is not edgeless. 
 Let $D$ be a $\gamma$-set of $T$ and $R \simeq H$ be an induced subgraph 
of $\left\langle D \right\rangle$. Choose arbitrarily an edge  $xy$  of $R$.  
Clearly both $x$ and $y$ are not leaves and by Lemma \ref{bridge}, 
neither $x$ nor $y$  is a cut-vertex belonging to $V^-(T)$. 
Hence $x,y \in V^=(T)$, because of Theorem \ref{exc}.  
Now we choose $xy$ so that $x$ is a leaf in $R$.  
By Theorem \ref{exc}, a vertex $y$ has a neighbor $z \in V^-(T)$. 
 Lemma \ref{bridge}  now implies  $N[z] \cap D = \{y\}$. 
But then  the graph $R_x = \left\langle V(R-x) \cup \{z\} \right\rangle$ 
is isomorphic to $R$.   Since  $z \in V^-(T)$ and $yz \in E(T)$, 
 Lemma \ref{bridge} shows that no $\gamma$-set of $T$ contains 
both $y$ and $z$.Thus, we arrive to a contradiction. 
 
Therefore, $T\left\langle \gamma \right\rangle$ contains only edgeless graphs.  
By Theorem \ref{exc} $V^-(T)$ is a $\gamma$-set of $T$. 
Assume first that there is a cut-vertex $x \in V^-(T)$. 
Then for any two neighbors $y$ and $z$ of $x$ the set 
$V_1 = (V^-(T) - \{x\}) \cup \{y,z\}$ is independent of cardinality $\gamma(T) + 1$. 
 Suppose $T$ is $\overline{K_r}$-$\gamma$-excellent for some $r \geq 2$.
 Choose any cardinality $r$ subset $V_1$ of  $ (V^-(T) - \{x\}) \cup \{y,z\}$ 
that contains both $y$ and $z$.  Now by Lemma \ref{bridge}, 
we conclude that no $\gamma$-set of $T$ has $V_1$ as a subset. 
Thus, $T\left\langle \gamma \right\rangle = \{K_1\}$. 

Finally, let $V^-(T)$ contains only leaves. By Theorem \ref{exc}, 
$T$ is a $1$-corona tree.  Clearly  $\gamma(T) = i(T) = \beta_0(T) = r$ 
 and then the required now follows by Proposition \ref{g=b0}. 
 \end{proof}

\section{Open problems and questions}
We conclude the paper by listing some interesting problems and directions for further research.

\begin{itemize}
\item[$\bullet$]  For which ordered pairs $(r, s)$ there are 
                                 $s$-regular $K_r$-excellent graphs of order $r(s-r+2)$ (see Theorem \ref{reg11})?  
                                 Find all  $12$-order $5$-regular  $K_3$-$\gamma$-excellent graphs. 
	\end{itemize}

\begin{itemize}
\item[$\bullet$]  Characterize/describe all graphs $F$ such that
                                 there is no $F$-$\mu$-excellent 	graph $G$ with $\mu(G) = |V(F)|$ (see Observation \ref{mu}). 
                                  Recall that there is no $P_3$-$\gamma$-excellent graph $G$ with $\gamma(G) = 3$ (Theorem \ref{neg}).
  \end{itemize}

\begin{itemize}
\item[$\bullet$] Let $b$ be a positive integer. 
Denote by $\mathscr{A}(\mu,b)$ the class of all $\mu$-excellent connected graphs $G$ 
for which $\mu(G) = b$ and  $|G\left\langle \mu \right\rangle|$ is maximum. 
It might be interesting for the reader to investigate these classes  at least  when $b$ is small. 
Note that we already know that 
 $\mathscr{ A}(\gamma,1)$ consists of all complete graphs, 
and  all connected graphs obtained from  $K_{2n}$, $n \geq 2$,  
      by removing a perfect matching form $\mathscr{ A}(\gamma,2)$ 
      (Example \ref{e12}). 
In addition,  by Example \ref{Knn} we	have $\gamma(K_3\square K_3) = 3$, 
$K_3\square K_3 \left\langle \gamma \right\rangle =
\{K_1, \overline{K_2}, K_2, K_1 \cup K_2, \overline{K_3}, K_3\}$	
and by Theorem \ref{neg} we know that there is no
 $P_3$-$\gamma$-excellent graph $G$ with $\gamma(G) = 3$. 	
Thus, $K_3 \square K_3$ belongs to $\mathscr{ A}(\gamma,3)$
 and  $|K_3\square K_3 \left\langle \gamma \right\rangle|  = 6$.  
Find $\mathscr{ A}(\gamma,3)$.
\end{itemize}

\begin{itemize}
\item[$\bullet$]   Find $T\left\langle \mu \right\rangle$ for each $\mu$-excellent tree  $T$, 
                                  where  $\mu \in \{i,\gamma_t, \gamma_R\}$ and $\gamma_R$
																	stand for the Roman domination number
																	(see \cite{hh}, \cite{h2} and \cite{sam3}, respectively). 
  \end{itemize}

\begin{itemize}
\item[$\bullet$] Find  graphs $H$ such that  each induced  subgraph of $K_p \square H$
                                 which is  isomorphic to $H$ has as a vertex set some  $H$-layer 
																(see  Theorem \ref{existance}). 
  \end{itemize}

\begin{itemize}
\item[$\bullet$]  Characterize/describe all connected $\overline{K_2}$-$\gamma$-excellent graphs $G$ with $\gamma(G)=2$. 
 \end{itemize}

\end{document}